\theoremstyle{plain}
\newtheorem{theorem}{Theorem}[section]
\newtheorem{lemma}[theorem]{Lemma}
\newtheorem{proposition}[theorem]{Proposition}
\newtheorem{corollary}[theorem]{Corollary}
\newtheorem*{claim}{Claim}
\newcommand{\MainTheoremName}{Main Theorem}
\theoremstyle{definition}
\newtheorem{definition}[theorem]{Definition}
\newtheorem{example}[theorem]{Example}
\theoremstyle{remark}
\newtheorem{question}[theorem]{Question}
\numberwithin{equation}{section}
\DeclareMathOperator{\cf}{cf}
\DeclareMathOperator{\ci}{ci}
\newcommand{\nbd}{\nobreakdash}
\newcommand{\card}[1]{\lvert #1\rvert}
\newcommand{\la}{\langle}
\newcommand{\ra}{\rangle}
\newcommand{\inv}[1]{#1^{-1}}
\newcommand{\closure}[1]{\overline{#1}}
\newcommand{\restrict}{\upharpoonright}
\newcommand{\density}[1]{d(#1)}
\newcommand{\weight}[1]{w(#1)}
\newcommand{\ow}[1]{Nt(#1)}
\newcommand{\piweight}[1]{\pi(#1)}
\newcommand{\naturals}{\mathbb{N}}
\newcommand{\rationals}{\mathbb{Q}}
\newcommand{\integers}{\mathbb{Z}}
\newcommand{\wma}{we may assume}
\newcommand{\op}{\mathrm{op}}
\newcommand{\omegaop}{$\omega^\op$\nbd-like}
\newcommand{\kappaop}{$\kappa^\op$\nbd-like}
\newcommand{\cardop}[1]{$#1^\op$\nbd-like}
\newcommand{\elemsub}{\prec}
\newcommand{\lots}{linearly ordered topological space}
\newcommand{\betann}{\beta\naturals\setminus\naturals}
\newcommand{\tlex}{\mathrm{lex}}
\newcommand{\locpct}{linearly ordered compactum}
\newcommand{\locpcta}{linearly ordered compacta}
\newcommand{\Locpcta}{Linearly ordered compacta}
\newcommand{\mcA}{\mathcal{ A}}
\newcommand{\mcB}{\mathcal{ B}}
\newcommand{\mcC}{\mathcal{ C}}
\newcommand{\mcD}{\mathcal{ D}}
\newcommand{\mcE}{\mathcal{ E}}
\newcommand{\mcF}{\mathcal{ F}}
\newcommand{\mcU}{\mathcal{ U}}
\newcommand{\mcV}{\mathcal{ V}}
\newcommand{\mcW}{\mathcal{ W}}
\newcommand{\om}{\omega}
\newcommand{\oml}{\om_1}
\newcommand{\ie}{{\it i.e.},}
\newcommand{\eg}{{\it e.g.},}
\newcommand{\many}{\nbd-many}
\newcommand{\bow}[1]{b\ow{X}}
\begin{document}

\title{GO-spaces and Noetherian spectra}

\author{David Milovich}

\subjclass[2000]{Primary 54F05; Secondary 54A25, 03E04}

\address{Texas A\&M International University\\ Dept. of Engineering, Mathematics, and Physics\\ 5201 University Blvd.\\ Laredo, TX 78041}
\email{david.milovich@tamiu.edu}
\thanks{Some of these results are from the author's thesis, which was partially supported by an NSF graduate fellowship.}

\begin{abstract}
The Noetherian type of a space is the least $\kappa$ for which the space has a \kappaop\ base, \ie\ a base in which no element has $\kappa$\many\ supersets.  We prove some results about Noetherian types of (generalized) ordered spaces and products thereof.  For example: the density of a product of not-too-many compact linear orders never exceeds its Noetherian type, with equality possible only for singular Noetherian types; we prove a similar result for products of Lindel\"of GO-spaces.  A countable product of compact linear orders has an \cardop{\omega_1}\ base if and only if it is metrizable, and every metrizable space has an \omegaop\ base.  An infinite cardinal $\kappa$ is the Noetherian type of a compact LOTS if and only if $\kappa\not=\omega_1$ and $\kappa$ is not weakly inaccessible.  There is a Lindel\"of LOTS with Noetherian type $\oml$ and there consistently is a Lindel\"of LOTS with weakly inaccessible Noetherian type.
\end{abstract}

\maketitle

\section{Introduction}

The Noetherian type of a topological space is an order\nbd-theoretic analog of its weight.  

\begin{definition}
Given a cardinal $\kappa$, define a poset to be $\kappa^{\op}$\nbd-\emph{like} if no element is below $\kappa$\nbd-many elements.
\end{definition}

In the context of families of subsets of a topological space, we will always implicitly order by inclusion.  For example, a descending chain of open sets of type $\om$ is \omegaop; an ascending chain of open sets of type $\om$ is \cardop{\oml}\ but not \omegaop.

\begin{definition} Given a space $X$, 
\begin{itemize}
\item the {\it weight} of $X$, or $\weight{X}$, is the least $\kappa\geq\omega$ such that $X$ has a base of size at most $\kappa$;
\item the \emph{Noetherian type} of $X$, or $\ow{X}$, is the least $\kappa\geq\omega$ such that $X$ has a base that is \kappaop.
\end{itemize}
\end{definition}
Equivalently, $\ow{X}$ is the least $\kappa\geq\om$ such that $X$ has a base $\mcB$ such that $\bigcap\mcA$ has empty interior for all $\mcA\in[\mcB]^\kappa$.

Noetherian type was introduced by Peregudov~\cite{peregudov97}.  
Preceding this introduction are several papers by Peregudov, \v Sapirovski\u\i\, and Malykhin~\cite{malykhin,peregudov76c,peregudov76p,peregudovS76} about the topological properties $\ow{\cdot}=\om$ and $\ow{\cdot}\leq\oml$.  
More recently, the author has extensively investigated the Noetherian type of $\betann$~\cite{milovichnts} and the Noetherian types of homogeneous compacta and dyadic compacta~\cite{milovichnth}.  
(See Engelking~\cite{engelking}, Juh\'asz~\cite{juhasz}, and Kunen~\cite{kunenst} for all undefined terms.)

A surprising result from~\cite{milovichnth} is that no dyadic compactum has Noetherian type $\oml$.  
In other words, given an \cardop{\oml}\ base of a dyadic compactum $X$, one can construct an \omegaop\ base of $X$.  
This result does not generalize to all compacta.  
In that same paper, it was shown how to construct a compactum with Noetherian type $\kappa$, for any infinite cardinal $\kappa$.  
It is still an open problem whether any infinite cardinals other than $\oml$ are excluded from the spectrum of Noetherian types of dyadic compacta, although it was shown that there are dyadic compacta with Noetherian $\om$ and dyadic compacta with Noetherian type $\kappa^+$, for every infinite cardinal $\kappa$ with uncountable cofinality.

\begin{question}
If $\kappa$ is a singular cardinal with cofinality $\omega$, then is there a dyadic compactum with Noetherian type $\kappa^+$?  
Is there a dyadic compactum with weakly inaccessible Noetherian type?
\end{question}

The above two questions are typical of the ``sup=max'' problems of set\nbd-theoretic topology.  See Juh\'asz~\cite{juhasz} for a systematic study of these problems.

Though the above two questions remain open problems, we can now answer the corresponding questions for compact linear orders.  The spectrum of Noetherian types of linearly ordered compacta includes $\om$, excludes $\oml$, includes all singular cardinals, includes $\kappa^+$ for all uncountable cardinals $\kappa$, and excludes all weak inacessibles.  In the process of proving this claim, we will prove a general technical lemma which says roughly that if $X$ is a product of not-too-many $\mu$\nbd-compact GO-spaces for some fixed cardinal $\mu$, then $\density{X}\leq\ow{X}$ and in most cases $\density{X}<\ow{X}$.  

\begin{definition}\
\begin{itemize}
\item A space $X$ is $\kappa$\nbd-compact if $\kappa$ is a cardinal and every open cover of $X$ has a subcover of size less than $\kappa$.
\item A {\it GO-space}, or generalized ordered space, is a subspace of a \lots.  Equivalently, a GO-space is a linear order with a topology that has a base consisting only of convex sets.
\item The {\it density} $\density{X}$ of a space $X$ is the least infinite cardinal $\kappa$ such that $X$ has a dense subset of size at most $\kappa$.
\end{itemize}
\end{definition}

It is natural to ask what happens to the spectrum of Noetherian types of compact linear orders if we gently relax the assumption of compactness.  
It turns out that there are Lindel\"of linear orders with Noetherian type $\oml$, and, less expectedly, that it is consistent (relative to existence of an inaccessible cardinal) that there is a Lindel\"of linear order with weakly inaccessible Noetherian type.  
However, it is not consistent for a Lindel\"of GO-space to have strongly inacessible Noetherian type.

We also consider the relationship between metrizability and Noetherian type, focusing on GO-spaces.  
Every metric space has Noetherian type $\om$.  
For a Lindel\"of GO-space $X$, $X$ is metrizable if and only if $\ow{X}=\om$ if and only if $\ow{X}=\oml$ and $X$ is separable.  
For a countable product $X$ of compact linear orders, $X$ is metrizable if and only if $\ow{X}=\om$ if and only if $\ow{X}=\oml$.  (Note that every Lindel\"of metric space is separable, and every compact GO-space is a compact linear order.)

\section{Small densities and large Noetherian types}

\begin{definition} The {\it $\pi$\nbd-weight} $\piweight{X}$ of a space $X$ is the least infinite cardinal $\kappa$ such that a space has $\pi$\nbd-base of size at most $\kappa$;
\end{definition}

\begin{proposition}\label{PROowpiw}\cite{peregudov97}
If $X$ is a space and $\piweight{X}<\cf\kappa\leq\kappa\leq\weight{X}$, then $\ow{X}>\kappa$.
\end{proposition}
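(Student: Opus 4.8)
The plan is to prove this by contradiction: assuming $\ow{X}\leq\kappa$, I will manufacture a base of $X$ of cardinality strictly less than $\kappa$, contradicting $\kappa\leq\weight{X}$. Since any $\lambda^{\op}$-like base with $\lambda\leq\kappa$ is automatically $\kappa^{\op}$-like, the assumption $\ow{X}\leq\kappa$ furnishes a $\kappa^{\op}$-like base $\mcB$; thus for every $B\in\mcB$ the family $\upset B=\{C\in\mcB:B\subseteq C\}$ of supersets of $B$ in $\mcB$ satisfies $\card{\upset B}<\kappa$. I also fix a $\pi$-base $\mcP$ with $\card{\mcP}=\piweight{X}<\cf\kappa$, and for each $P\in\mcP$ I choose some $B_P\in\mcB$ with $B_P\subseteq P$ (possible since $\mcB$ is a base and $P$ is a nonempty open set). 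The candidate base is
\[
\mcC=\bigcup_{P\in\mcP}\upset{B_P}=\{C\in\mcB:B_P\subseteq C\text{ for some }P\in\mcP\}.
\]

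Next I would bound $\card{\mcC}$. It is a union of at most $\piweight{X}$ sets, each of the form $\upset{B_P}$ and hence of cardinality less than $\kappa$. Because $\piweight{X}<\cf\kappa$, a collection of fewer than $\cf\kappa$ cardinals each below $\kappa$ is bounded below $\kappa$, so the supremum of the $\card{\upset{B_P}}$ is some $\mu<\kappa$; then $\card{\mcC}\leq\piweight{X}\cdot\mu<\kappa$. This is precisely where the hypothesis $\piweight{X}<\cf\kappa$ (rather than merely $\piweight{X}\leq\kappa$) is used.

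The crux is to verify that $\mcC$ is a base. Given a point $x$ and an open neighborhood $U\ni x$, I pick $B\in\mcB$ with $x\in B\subseteq U$. Since $B$ is nonempty and open and $\mcP$ is a $\pi$-base, there is $P\in\mcP$ with $P\subseteq B$; then $B_P\subseteq P\subseteq B$, so $B\in\upset{B_P}\subseteq\mcC$, while still $x\in B\subseteq U$. Hence $\mcC$ is a base, whence $\weight{X}\leq\card{\mcC}<\kappa$, the desired contradiction. I expect the main obstacle to be isolating this ``squeezing'' step: one must use the $\pi$-base to slip a member $P$ inside an arbitrary basic neighborhood $B$, and then exploit the $\kappa^{\op}$-like hypothesis to see that the relatively few supersets of the chosen witnesses $B_P$ already form a base. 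Once that observation is in hand, the cardinal arithmetic controlling $\card{\mcC}$ is routine.
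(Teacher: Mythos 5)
Your proof is correct, and it runs in the opposite direction from the paper's, which is a genuinely different (if dual) route. The paper argues directly that \emph{no} base is $\kappa^{\op}$\nbd-like: given an arbitrary base $\mcA$ and a $\pi$\nbd-base $\mcB$ with $\card{\mcB}\leq\piweight{X}$, it uses $\card{\mcA}\geq\weight{X}\geq\kappa$ to assign each member of $\mcA$ a $\pi$\nbd-base element inside it, and since there are fewer than $\cf\kappa$ possible values, pigeonhole yields a single $V\in\mcB$ contained in $\kappa$\many\ members of $\mcA$; any basic $W\subseteq V$ then has $\kappa$\many\ supersets. You instead assume a $\kappa^{\op}$\nbd-like base exists and extract from it the family $\mcC=\bigcup_{P\in\mcP}\upset B_P$, verifying it is itself a base of size less than $\kappa$, contradicting $\kappa\leq\weight{X}$. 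The hypothesis $\piweight{X}<\cf\kappa$ is exploited dually: the paper needs a fiber of size $\kappa$ when $\kappa$\many\ objects fall into fewer than $\cf\kappa$ classes, while you need that a union of fewer than $\cf\kappa$ up-sets, each of size less than $\kappa$, is still of size less than $\kappa$. What your version buys is a quantitative refinement that the paper's phrasing leaves implicit: under $\piweight{X}<\cf\kappa$, any $\kappa^{\op}$\nbd-like base already \emph{contains} a base of size less than $\kappa$, so $\ow{X}\leq\kappa$ forces $\weight{X}<\kappa$; the paper's argument is marginally shorter. Two bookkeeping points, both harmless as you have written things: your $\upset B_P$ includes $B_P$ itself while the $\kappa^{\op}$\nbd-like condition bounds the (proper) supersets, but adding one element keeps each up-set below the infinite cardinal $\kappa$; and at the end one should say $\weight{X}\leq\max(\card{\mcC},\omega)<\kappa$, which holds since $\kappa\geq\cf\kappa>\piweight{X}\geq\omega$.
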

\begin{proof}
Suppose $\mcA$ is a base of $X$ and $\mcB$ is $\pi$\nbd-base of $X$ of size at most $\piweight{X}$.  
We then have $\card{\mcA}\geq\kappa$; hence, there exist $\mcU\in[\mcA]^\kappa$ and $V\in\mcB$ such that $V\subseteq\bigcap\mcU$.  
Hence, there exists $W\in\mcA$ such that $W\subseteq V\subseteq\bigcap\mcU$; hence, $\mcA$ is not \kappaop.
\end{proof}

Note that if $X$ is a product of at most $\density{X}$\many\ GO-spaces, then $\piweight{X}=\density{X}$ is witnessed by the following construction.  
For any $D$ (topologically) dense in $X$ and of minimal size, collect all the finitely supported products of topologically open intervals with endpoints from the union of $\{\pm\infty\}$ and the set of all coordinates of points from $D$.

Trivially, $\ow{X}\leq\weight{X}^+$ for all spaces $X$.  The next example shows that this upper bound is attained.

\begin{example}\cite{milovichnth}
The double\nbd-arrow space, defined as $((0,1]\times\{0\})\cup([0,1)\times\{1\})$ ordered lexicographically, has $\pi$\nbd-weight $\omega$ and weight $2^{\aleph_0}$.  By Proposition~\ref{PROowpiw}, it has Noetherian type $\left(2^{\aleph_0}\right)^+$.
\end{example}

\section{Lindel\"{o}f GO-spaces}\label{SEClots}


\begin{theorem}\label{THMallmetric}
Every metric space has an \omegaop\ base.
\end{theorem}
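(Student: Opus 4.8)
The goal is to produce a base $\mcB$ of a metric space $(X,d)$ in which no member has infinitely many supersets; by definition this makes $\mcB$ an \omegaop\ base and forces $\ow{X}=\om$. The plan is to stratify such a base into countably many levels, each of which is point\nbd-finite, while forcing the diameters of the members to shrink with the level. The mechanism tying these two features together is that a superset of a set $B$ must have diameter at least $\diam(B)$, so if the levels have shrinking diameters then the supersets of $B$ are confined to finitely many levels, and point\nbd-finiteness bounds how many supersets each level can contribute.

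Concretely, I would invoke paracompactness of metric spaces (Stone's theorem): for each $m<\om$, the cover of $X$ by balls of radius $1/(m+1)$ has a locally finite open refinement $\mcL_m$, and every member of $\mcL_m$ has diameter at most $2/(m+1)$. Setting $\mcB=\bigcup_{m<\om}\mcL_m$ gives a base, since for $x\in U$ with $U$ open one chooses $m$ with $2/(m+1)$ smaller than the radius of some ball about $x$ inside $U$ and takes the member of $\mcL_m$ containing $x$. Each $\mcL_m$ is point\nbd-finite, meaning every point lies in only finitely many of its members.

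It then remains to bound, for a fixed nonempty $B\in\mcB$, the number of $B'\in\mcB$ with $B\subseteq B'$; pick $x\in B$, so every such $B'$ contains $x$. If $\diam(B)=d>0$, then any superset $B'$ has $\diam(B')\geq d$, so $B'$ cannot lie in $\mcL_m$ once $2/(m+1)<d$; thus the supersets occupy finitely many levels, and by point\nbd-finiteness each level contains only finitely many members through $x$, giving finitely many supersets in all. The one step that genuinely needs care---and the main obstacle---is the case $\diam(B)=0$, that is $B=\{x\}$ for an isolated point $x$, where the diameter bound is vacuous. Here I would use the isolation radius $\varepsilon>0$ with $B(x,\varepsilon)=\{x\}$: once $2/(m+1)<\varepsilon$, a member of $\mcL_m$ containing $x$ has diameter less than $\varepsilon$ and hence can contain no point other than $x$, so it equals $\{x\}$ itself. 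Therefore proper supersets of $\{x\}$ again appear in only finitely many levels, and the argument closes. Consequently $\mcB$ is \omegaop, so $\ow{X}=\om$.
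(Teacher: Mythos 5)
Your proposal is correct and takes essentially the same route as the paper: both stack locally finite open refinements (Stone's theorem) of ball covers with shrinking radii into a base $\mcB=\bigcup_m\mcL_m$ and show that any member's supersets are confined to finitely many levels, each contributing only finitely many sets by (local, hence point) finiteness. Two minor notes: the paper's inscribed-ball argument (an $\epsilon_0$\nbd-ball about $p\in U$ contained in $U$, so a superset from a fine level would collapse into $U$) handles your isolated-point case without a separate case split, and you should explicitly discard $\varnothing$ from $\mcB$ as the paper does, since an empty member would have infinitely many supersets.
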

\begin{proof}
Let $X$ be a metric space.  For each $n<\omega$, let $\mcA_n$ be a locally finite open refinement of the (open) balls of radius $2^{-n}$ in $X$.  Set $\mcA=\bigcup_{n<\omega}\mcA_n\setminus\{\varnothing\}$.  The set $\mcA$ is a base of $X$ because if $p\in X$ and $n<\omega$, then there exists $U\in\mcA_{n+1}$ such that $p\in U$ and $U$ is contained in the ball of radius $2^{-n}$ with center $p$.  Let us show that $\mcA$ is \omegaop.  Suppose that $m<\omega$, $U\in\mcA$, $V\in\mcA_m$, and $U\subsetneq V$.  There then exist $p\in U$ and $\epsilon_0>\epsilon_1>0$ such that the $\epsilon_0$\nbd-ball with center $p$ is contained in $U$ and the $\epsilon_1$\nbd-ball with center $p$ intersects only finitely many elements of $\mcA_n$ for all $n<\omega$ satisfying $2^{-n}>\epsilon_0/2$.  If $2^{-m}\leq\epsilon_0/2$, then $V$ is contained in the $\epsilon_0$\nbd-ball with center $p$, in contradiction with $U\subsetneq V$.  Hence, $2^{-m}>\epsilon_0/2$; hence, there are only finitely many possibilities for $m$ and $V$ given $U$, for $V$ intersects the $\epsilon_1$\nbd-ball with center $p$.
\end{proof}

\begin{lemma}\label{LEMlocfinlindeloflots}
Let $X$ be a Lindel\"{o}f GO-space with open cover $\mcA$.  The cover $\mcA$ has a countable, locally finite refinement consisting only of countable unions of open convex sets.
\end{lemma}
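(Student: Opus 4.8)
The plan is to replace $\mcA$ by a well-behaved countable refinement built from convex sets and then to disjointify it along the order, exploiting the fact that deleting finitely many convex closed sets from a convex open set leaves only finitely many convex components. This last fact is what will keep each refinement element a countable (indeed finite) union of open convex sets.

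First I would pass to convex pieces with controlled closures. Since a GO-space is regular and has a base of convex open sets, for each $x\in X$ I can choose some $A\in\mcA$ with $x\in A$ and then convex open sets $G$ and $C$ with $x\in G\subseteq\closure{G}\subseteq C\subseteq A$. The family of all such $G$ is an open cover of $X$, so the Lindel\"of property yields a countable subcover $H_0,H_1,\dots$, and for each $i$ I retain the associated convex open $C_i\supseteq\closure{H_i}$ with $C_i\subseteq A_{(i)}$ for some $A_{(i)}\in\mcA$. Here I would use the standard fact that in a GO-space the closure of a convex set is convex, so each $\closure{H_i}$ is a convex closed set.

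Next I would disjointify in the style of the usual ``Lindel\"of plus regular implies paracompact'' argument, but arranged to preserve convexity. Define
\[ W_i=C_i\setminus\bigcup_{j<i}\closure{H_j}. \]
Each $W_i$ is open and contained in $C_i\subseteq A_{(i)}$, so $\{W_i\}_{i<\omega}$ is a countable family refining $\mcA$. For covering, given $x$ let $m$ be least with $x\in\closure{H_m}$ (such $m$ exists since the $H_i$, hence the $\closure{H_i}$, cover $X$); then $x\in\closure{H_m}\subseteq C_m$ while $x\notin\closure{H_j}$ for $j<m$, so $x\in W_m$. For local finiteness, given $x$ let $i_0$ be least with $x\in H_{i_0}$; then $H_{i_0}$ is a neighborhood of $x$, and for every $i>i_0$ we have $W_i\subseteq C_i\setminus\closure{H_{i_0}}$, which is disjoint from $H_{i_0}$, so $H_{i_0}$ meets only the finitely many $W_i$ with $i\leq i_0$.

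Finally I would verify the structural conclusion, which is the heart of the matter. Each $\closure{H_j}\cap C_i$ is an intersection of convex sets, hence convex, so $W_i$ is obtained from the convex open set $C_i$ by deleting the $i$ convex subsets $\closure{H_0}\cap C_i,\dots,\closure{H_{i-1}}\cap C_i$; and deleting finitely many convex subsets from a convex subset of a linear order leaves only finitely many convex components (at most one more than the number of deleted sets). Since the convex components of an open subset of a GO-space are themselves open, each $W_i$ is a finite, hence countable, union of open convex sets. I expect this to be the main obstacle: the naive refinement by convex components can have uncountably many pieces—already in $\omega_1+1$, whose isolated points form such an open set—so the entire construction must be organized around keeping the number of convex components of each refinement element finite, which is exactly what deleting finitely many convex closed sets from a convex open set guarantees.
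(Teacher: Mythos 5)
Your proof is correct, but it takes a genuinely different route from the paper's. You run the classical Morita-style argument that a regular Lindel\"of space admits countable locally finite \emph{open} refinements---shrink each cover element to a convex open $G$ with $\closure{G}\subseteq C\subseteq A$ using regularity, extract a countable subcover, then disjointify via $W_i=C_i\setminus\bigcup_{j<i}\closure{H_j}$---and your convexity bookkeeping is exactly what makes this work in the GO setting: the closure of a convex set is convex (this uses that the GO topology refines the order topology, so rays are open), deleting $n$ convex sets from a convex set leaves at most $n+1$ convex components, and convex components of open sets are open since the topology has a base of convex sets. The paper instead disjointifies the original convex refinement directly, setting $B_n=A_n\setminus\bigcup_{m<n}A_m$; these $B_n$ are locally finite but no longer open, so the paper must build an auxiliary locally finite closed cover $\mcE$ from convex open sets meeting only finitely many $B_n$ (its families $\mcC$ and $\mcD$), and then re-fatten each $B_n$ to the open set $F_n=A_n\setminus\bigcup\{E\in\mcE:E\cap B_n=\varnothing\}$, which removes countably many sets from $A_n$ and hence can only be counted as a \emph{countable} union of open convex sets. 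Your approach avoids that machinery and delivers a strictly stronger conclusion---each refinement element is a \emph{finite} union of open convex sets---using the same underlying ingredients (note the paper's definition of $\mcD$, requiring $\closure{U}\subseteq V$, also quietly invokes regularity, so you have not smuggled in an extra hypothesis); everything the paper later needs from this lemma, in Lemma~\ref{LEMnonseplots}, is preserved or improved. Your closing observation about the isolated points of $\omega_1+1$ correctly identifies why the component count must be controlled rather than taken for granted.
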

\begin{proof}
Let $\{A_n:n<\omega\}$ be a countable refinement of $\mcA$ consisting only of open convex sets.  For each $n<\omega$, set $B_n=A_n\setminus\bigcup_{m<n}A_m$; set $\mcB=\{B_n:n<\omega\}$.  The set $\mcB$ is a locally finite refinement of $\mcA$.  Let $\mcC$ be the set of open convex subsets of $X$ which intersect only finitely many elements of $\mcB$.  Let $\mcD$ be the set of $U\in\mcC$ satisfying $\closure{U}\subseteq V$ for some $V\in\mcC$.  Let $\{D_n:n<\omega\}$ be a countable subcover of $\mcD$.  For each $n<\omega$, set $E_n=\closure{D_n\setminus\bigcup_{m<n}D_m}$; set $\mcE=\{E_n:n<\omega\}$.  The set $\mcE$ is a locally finite refinement of $\mcC$.  For each $n<\omega$, set $F_n=A_n\setminus\bigcup\{E\in\mcE:B_n\cap E=\varnothing\}$, which is a countable union of convex sets; set $\mcF=\{F_n:n<\omega\}$.  Since $\mcE$ is locally finite, each $F_n$ is open.  Hence, each $F_n$ is a countable union of open convex sets.  Moreover, $B_n\subseteq F_n\subseteq A_n$ for all $n<\omega$; hence, $\mcF$ is a refinement of $\mcA$.  

Thus, it suffices to show that $\mcF$ is locally finite.  Since $\mcE$ is a locally finite cover of $X$, it suffices to show that each element of $\mcE$ only intersects finitely many elements of $\mcF$.  Let $i<\omega$ and choose $V\in\mcC$ such that $E_i\subseteq V$.  Suppose $j<\omega$ and $E_i\cap F_j\not=\varnothing$.  We then have $E_i\cap B_j\not=\varnothing$ by definition of $F_j$.  Hence, $V\cap B_j\not=\varnothing$; hence, there are only finitely many possibilities for $B_j$; hence, there are only finitely many possibilities for $F_j$.
\end{proof}

\begin{definition}
Let $H_\theta$ denote the set of all sets that are hereditarily of size less $\theta$, where $\theta$ is a regular cardinal sufficiently large for the argument at hand.  The relation $M\elemsub H_\theta$ means that $\la M,\in\ra$ is an elementary substructure of $\la H_\theta,\in\ra$.
\end{definition}

\begin{lemma}\label{LEMnonseplots}
Let $X$ be a nonseparable, Lindel\"{o}f GO-space.  The space $X$ does not have an \omegaop\ base.
\end{lemma}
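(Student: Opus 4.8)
The plan is to argue by contradiction using a countable elementary submodel. Suppose $X$ has an \omegaop\ base $\mcB$, and fix $M\elemsub H_\theta$ countable with $\mcB\in M$ (together with $X$, its order, and its topology). Since $X$ is nonseparable and $X\cap M$ is countable, $X\cap M$ is not dense, so I may fix a point $p\in X\setminus\closure{X\cap M}$ and a convex open $W\ni p$ with $W\cap M=\varnothing$. The target is not to place a point of $X\cap M$ near $p$ (the gap around $p$ is genuinely $M$-free), but rather to produce a single member of $\mcB$ with infinitely many supersets, contradicting the assumption that $\mcB$ is \omegaop.

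The first step is to record the structure forced by the hypotheses. Let $s=\sup\{x\in X\cap M:x<p\}$ and $t=\inf\{x\in X\cap M:x>p\}$, computed in the Dedekind completion and with the evident modifications when a side is empty or $p$ is one-sidedly isolated. Because $p\notin\closure{X\cap M}$ one gets $s<p<t$ and $(s,t)\cap M=\varnothing$. Fix $B_0\in\mcB$ with $p\in B_0\subseteq W$; since a nonempty member of $M$ meets $M$, we have $B_0\notin M$. The crucial leverage from the \omegaop\ hypothesis is this: any $C\in\mcB\cap M$ with $p\in C$ is a convex open interval whose endpoints lie in $M$, and an elementarity argument shows such a $C$ must contain $(s,t)\supseteq B_0$; hence every member of $\mcB\cap M$ containing $p$ is one of the \emph{finitely many} supersets of $B_0$. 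Thus there are only finitely many members of $\mcB\cap M$ containing $p$, their intersection is an interval $(\alpha_*,\beta_*)$ with $\alpha_*\le s<t\le\beta_*$, and this is the smallest $M$-neighborhood of $p$. A useful clean sub-observation here is that $s$ and $t$ cannot both lie in $M$: if they did, elementarity applied to the true statement ``there is a point of $X$ strictly between $s$ and $t$'' (witnessed by $p$) would put such a witness in $X\cap M\cap(s,t)$, contradicting $(s,t)\cap M=\varnothing$. So at least one side of the cut, say the left, is a genuine limit, and points of $X\cap M$ approach $s$ cofinally from below with $\alpha_*<s$.

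It remains to convert this configuration into an element with infinitely many supersets, and this is where Lemma~\ref{LEMlocfinlindeloflots} and Lindel\"ofness enter. When some point of $X$ has uncountable one-sided character the argument is immediate: the convex neighborhoods on that side form an uncountable chain, and an \omegaop\ chain is countable, which is already the contradiction (this is exactly what excludes, e.g., $\omega_1+1$, where the top point carries an uncountable decreasing chain of neighborhoods). In the general case I would feed a suitable open cover of $X$ into Lemma~\ref{LEMlocfinlindeloflots} \emph{inside} $M$, so that the resulting countable, locally finite refinement $\mcF$ lies in $M$ and hence $\mcF\subseteq M$; the point $p$ then lies in a convex piece of some member of $\mcF\cap M$, and the interplay between the local finiteness of $\mcF$ near the $M$-free gap and the global finiteness of superset-sets supplied by $\mcB$ being \omegaop\ should pin down a base element whose supersets must track the cofinally-many neighborhoods of $s$ coming from the points of $X\cap M$ below $s$, producing infinitely many of them.

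The main obstacle is precisely this last conversion in the case where $X$ has countable character at every point, the paradigmatic example being a Souslin line, which is a nonseparable, hereditarily Lindel\"of LOTS. There no single point furnishes an uncountable neighborhood chain, so the contradiction cannot be localized at one point and must instead be squeezed out of the Lindel\"of refinement machinery of Lemma~\ref{LEMlocfinlindeloflots} together with the counting imposed by \omegaop ness of $\mcB$. I also expect to spend some care on the degenerate shapes of the cut at $p$ (one side empty, $p$ isolated on one side, or exactly one of $s,t$ lying in $M$), but by the observation above at least one side is always a true limit, and these cases should reduce to, or be easier than, the two-sided limit case.
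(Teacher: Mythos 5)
You have assembled the paper's endgame correctly -- the countable $M\elemsub H_\theta$, the $M$\nbd-free convex gap, and the Lindel\"of cofinality trick showing that a convex set lying in $M$ and meeting the gap must swallow the whole gap -- but the heart of the proof is missing, and you flag the missing piece yourself. Observing that only finitely many members of $\mcB\cap M$ contain $p$ is not a contradiction; to refute \omegaop ness you must exhibit a \emph{single} base element with \emph{infinitely many distinct} supersets, and one application of Lemma~\ref{LEMlocfinlindeloflots} inside $M$ cannot manufacture them. The paper's key idea is an $\omega$\nbd-stage recursion: at stage $n$, for each point $p$ it forms the open set $V_p$ obtained by intersecting the finitely many previously built locally finite pieces containing $p$, chooses $U_p\in\mcA$ with $p\in U_p\subsetneq V_p$ -- the \emph{strict} inclusion is the engine, forcing the base sets chosen at distinct stages to agree only on singletons -- and then applies Lemma~\ref{LEMlocfinlindeloflots} to $\{U_p:p\in X\}$ to produce a new countable, locally finite, pairwise $\subseteq$\nbd-incomparable cover $\{B_{n,k}:k<\omega\}$ by countable unions of open convex sets, with $B_{n,k}\subseteq A_{n,k}\in\mcA$. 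With the whole array in $M$, a point $c$ of the gap lies in some $B_{n,i_n}$ for every $n$; the convex piece of $B_{n,i_n}$ through $c$ swallows the gap $(a,e)$, so one fixed $U\in\mcA$ with $U\subseteq(a,e)$ acquires the pairwise distinct supersets $A_{n,i_n}$ for all $n<\omega$. Your proposed ``interplay between local finiteness of $\mcF$ and the counting imposed by \omegaop ness'' contains no mechanism that generates fresh, provably distinct supersets at infinitely many scales -- which is exactly why the Souslin-line case, where no single point carries the burden, defeats your sketch, as you concede.

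Two secondary problems. First, your ``crucial leverage'' step treats members of $\mcB\cap M$ containing $p$ as convex intervals with endpoints in $M$: base elements of a GO-space need not be convex, and an open subset of a Lindel\"of GO-space can have uncountably many convex components (consider the isolated points in Example~\ref{EXlindom1}), so the component through $p$ need not belong to $M$, let alone contain $(s,t)$. The paper regains convexity precisely by arguing only about the refinement sets $B_{n,k}$, which are \emph{countable} unions of convex open sets; a countable family that is an element of $M$ is a subset of $M$, so every convex piece is in $M$ and the gap-swallowing argument is legitimate. Second, your ``immediate'' uncountable-character case assumes any base must contain an uncountable decreasing chain at such a point; that is clear for an endpoint-type cut such as $\omega_1\in\omega_1+1$, but at a two-sided point the base neighborhoods can slide on both sides and comparability has to be argued (the paper's analogous lower bounds in Theorem~\ref{THMkappalots} use the Pressing Down Lemma rather than chain extraction). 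In any event the paper needs no such case split: its recursion handles all characters uniformly.
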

\begin{proof}
Let $\mcA$ be a base of $X$.  Let us show that $\mcA$ is not \omegaop.  First, let us construct sequences of open sets $\la A_{n,k}\ra_{n,k<\omega}$ and $\la B_{n,k}\ra_{n,k<\omega}$.  Our requirements are that $B_{n,i}\subseteq A_{n,i}\in\mcA$, that $B_{n,i}$ is a countable union of open convex sets, that $\{B_{n,k}:k<\omega\}$ is a locally finite cover of $X$ and pairwise $\subseteq$\nbd-incomparable, and that $\{A_{i,k}:k<\omega\}\cap\{A_{j,k}:k<\omega\}\subseteq[X]^1$ for all $i<j<\omega$ and $n<\omega$.

Suppose $n<\omega$ and we are given $\la A_{m,k}\ra_{k<\omega}$ and $\la B_{m,k}\ra_{k<\omega}$ for all $m<n$ and they meet our requirements.  Let $p\in X$.  Set $V_p=\bigcap\{B_{m,k}:m<n\text{ and }k<\omega\text{ and }p\in B_{m,k}\}$.  The set $V_p$ is open.  If $\card{V_p}=1$, then set $U_p=V_p$.  If $\card{V_p}>1$, then choose $U_p\in\mcA$ such that $p\in U_p\subsetneq V_p$.  Set $\mcU=\{U_p:p\in X\}$.  By Lemma~\ref{LEMlocfinlindeloflots}, there exists a countable, locally finite refinement $\mcB_n$ of $\mcU$ consisting only of countable unions of open convex sets.  Since $\mcB_n$ is locally finite, it has no infinite ascending chains; hence, \wma\ $\mcB_n$ is pairwise $\subseteq$\nbd-incomparable because we may shrink $\mcB_n$ to its maximal elements.  Let $\{B_{n,k}:k<\omega\}=\mcB_n$.  For each $k<\omega$, set $A_{n,k}=U_p$ for some $p\in X$ satisfying $B_{n,k}\subseteq U_p$.  Suppose $m<n$ and $i,j<\omega$ and $A_{m,i}=A_{n,j}\not\in[X]^1$.  Choose $p\in X$ such that $A_{n,j}=U_p$; choose $k<\omega$ such that $p\in B_{m,k}$.  We then have $B_{m,i}\subseteq A_{m,i}=U_p\subsetneq V_p\subseteq B_{m,k}$, in contradiction with the pairwise $\subseteq$\nbd-incomparability of $\{B_{m,l}:l<\omega\}$.  Thus, $\{A_{m,l}:l<\omega\}\cap\{A_{n,l}:l<\omega\}\subseteq[X]^1$ for all $m<n$.  By induction, $\la A_{n,k}\ra_{n,k<\omega}$ and $\la B_{n,k}\ra_{n,k<\omega}$ meet our requirements.

Let $\{X,\leq,\mcA\}\subseteq M\elemsub H_\theta$ and $\card{M}=\omega$.  Since $X$ is nonseparable, there must be a nonempty open convex set $W$ disjoint from $M$.  Since $X$ is Lindel\"of, every isolated point of $X$ is in $M$.  Hence, $W$ must be infinite. Choose $\{a<c<e\}\subseteq W$.  Choose $U\in\mcA$ such that $U\subseteq(a,e)$.  By elementarity, \wma\ that $\la\la A_{n,k},B_{n,k}\ra\ra_{n,k<\om}\in M$ for all $n,k<\omega$.  For each $n<\omega$, choose $i_n<\omega$ such that $c\in B_{n,i_n}$.  To complete the proof, it suffices to show that $U$ has infinitely many supersets in $\mcA$.  Fix $n<\omega$.  Since $c\not\in M$, we cannot have $A_{n,i_n}=\{c\}$; hence, $A_{n,i_n}\not=A_{m,i_m}$ for all $m<n$.  Hence, it suffices to show that $U\subseteq A_{n,i_n}$. There exists $\la I_j\ra_{j<\omega}\in M$ such that $B_{n,i_n}=\bigcup_{j<\omega}I_j$ and each $I_j$ is convex and open.  Hence, there exists $j<\omega$ such that $c\in I_j$.  Since $U\subseteq (a,e)$ and $I_j\subseteq B_{n,i_n}\subseteq A_{n,i_n}$, it suffices to show that $(a,e)\subseteq I_j$.  Seeking a contradiction, suppose $(a,e)\not\subseteq I_j$. Since $c\in I_j$, we must have $a<b<I_j$ for some $b\in X$ or $I_j<d<e$ for some $d\in X$.  By symmetry, \wma\ we have the former.  Since $X$ is Lindel\"of, $\{p\in X: p<I_j\}$ has a countable cofinal subset $Y$.  Since $I_j\in M$, \wma\ $Y\in M$.  Hence, $Y\subseteq M$; hence, there exists $y\in M$ such that $b\leq y<I_j$.  Hence, $M$ intersects $(a,e)$; hence, $M$ intersects $W$, which is absurd.
\end{proof}

\begin{theorem}\label{THMlindeloflots}
Let $X$ be a Lindel\"{o}f GO-space.  The following are equivalent.
\begin{enumerate}
\item\label{enumXmetric} $X$ is metrizable.
\item\label{enumXomegaopbase} $X$ has an \omegaop\ base.
\item\label{enumXsepomega1opbase} $X$ is separable and has an \cardop{\omega_1}\ base.
\end{enumerate}
\end{theorem}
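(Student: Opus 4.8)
The plan is to establish the three implications \eqref{enumXmetric}$\Rightarrow$\eqref{enumXomegaopbase}$\Rightarrow$\eqref{enumXsepomega1opbase}$\Rightarrow$\eqref{enumXmetric}, each of which should now be within reach using the lemmas already proved. The first implication, \eqref{enumXmetric}$\Rightarrow$\eqref{enumXomegaopbase}, is immediate from Theorem~\ref{THMallmetric}, which shows that \emph{every} metric space has an \omegaop\ base, so no order-theoretic hypothesis is even needed here. For \eqref{enumXomegaopbase}$\Rightarrow$\eqref{enumXsepomega1opbase}, note first that an \omegaop\ base is trivially \cardop{\omega_1}, so the only content is separability; but this is exactly the contrapositive of Lemma~\ref{LEMnonseplots}, which asserts that a nonseparable Lindel\"of GO-space has no \omegaop\ base. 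Thus \eqref{enumXomegaopbase} forces $X$ to be separable and supplies the \cardop{\omega_1}\ base directly.

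The real work is the remaining implication \eqref{enumXsepomega1opbase}$\Rightarrow$\eqref{enumXmetric}: I must show that a separable Lindel\"of GO-space with an \cardop{\omega_1}\ base is metrizable. The strategy is to leverage the classical characterization of metrizability for GO-spaces. A separable Lindel\"of space is hereditarily Lindel\"of and hereditarily separable, so it is perfectly normal; for a GO-space, metrizability is then equivalent to having a $\sigma$-disjoint (or $\sigma$-point-finite) base, or equivalently being first countable together with suitable base conditions. So I would aim to show first countability, i.e.\ that $X$ has countable character at each point. The separability gives a countable dense set $D$; the danger points are jumps (pairs with nothing between them) and endpoints of ``gaps'' accumulated from one side only, where the GO-topology may add half-open intervals.

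The key step, and the main obstacle, is to use the \cardop{\omega_1}\ base to rule out points of uncountable character. Suppose toward a contradiction that some point $p$ has uncountable character; then the convex neighborhoods of $p$ form an uncountable strictly decreasing system on at least one side, say from the right, with no countable cofinal subfamily shrinking to $p$. Each basic open set $U$ from the \cardop{\omega_1}\ base that sits just to the right of $p$ and is small must have at most countably many supersets in the base. The plan is to derive a contradiction by exhibiting a single small basic open set with uncountably many base supersets, built from the uncountable decreasing neighborhood system: if the character at $p$ is uncountable, the witnessing tower of convex sets yields uncountably many base elements each containing a fixed small basic set, violating the \cardop{\omega_1}\ condition. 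I expect the technical heart to lie in converting ``uncountable character at $p$'' into ``uncountably many base supersets of one fixed basic set,'' which will require carefully choosing the fixed set using separability (so that $p$ is approached by points of $D$ on the relevant side) and Lindel\"ofness (to control cofinalities of the relevant cuts, exactly as in the final paragraph of Lemma~\ref{LEMnonseplots}). Once first countability is secured, I would invoke the standard fact that a first countable, perfectly normal (hence hereditarily paracompact) GO-space that is separable and Lindel\"of is metrizable, completing the cycle of implications.
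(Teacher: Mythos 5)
Your first two implications coincide with the paper's: (\ref{enumXmetric})$\Rightarrow$(\ref{enumXomegaopbase}) is Theorem~\ref{THMallmetric}, and (\ref{enumXomegaopbase})$\Rightarrow$(\ref{enumXsepomega1opbase}) is the contrapositive of Lemma~\ref{LEMnonseplots}. But your plan for (\ref{enumXsepomega1opbase})$\Rightarrow$(\ref{enumXmetric}) has a fatal gap: the ``standard fact'' you intend to invoke at the end is false. The Sorgenfrey line is a separable, Lindel\"of, first countable, perfectly normal (indeed hereditarily Lindel\"of and hereditarily separable) GO-space that is not metrizable. So even if your tower argument succeeded in proving first countability, the cycle of implications would not close; first countability is the wrong target. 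It is also a redundant target: in a GO-space, separability alone already implies countable character at every point (given a strictly decreasing $\omega_1$-tower converging to a point from one side, the consecutive differences form uncountably many pairwise disjoint open intervals, so all but countably many are empty, which produces uncountably many isolated points and contradicts separability). Thus your proposed use of the \cardop{\omega_1}\ base is spent on a property that comes for free and does not suffice.

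What the \cardop{\omega_1}\ base actually buys is a \emph{global weight} bound, via Proposition~\ref{PROowpiw}, and this is the paper's whole argument for (\ref{enumXsepomega1opbase})$\Rightarrow$(\ref{enumXmetric}): separability of a GO-space gives $\piweight{X}=\omega$ (open intervals with endpoints in the countable dense set together with $\pm\infty$ form a $\pi$\nbd-base); if $\weight{X}\geq\omega_1$, then $\piweight{X}<\cf\omega_1\leq\omega_1\leq\weight{X}$, so Proposition~\ref{PROowpiw} yields $\ow{X}>\omega_1$, contradicting the \cardop{\omega_1}\ base. Hence $\weight{X}=\omega$, and a regular second countable space is metrizable. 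Note how this correctly disposes of the Sorgenfrey obstruction that sinks your route: the Sorgenfrey line has $\pi$\nbd-weight $\omega$ but weight $2^{\aleph_0}$, so by the same proposition its Noetherian type is $\left(2^{\aleph_0}\right)^+>\omega_1$ and it never satisfies hypothesis (\ref{enumXsepomega1opbase}). If you want to salvage your local approach, you would have to upgrade ``uncountably many base supersets of one fixed basic set'' from a character estimate to a weight estimate, which is exactly what Peregudov's proposition packages.
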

\begin{proof}
By Theorem~\ref{THMallmetric}, (\ref{enumXmetric}) implies (\ref{enumXomegaopbase}).  By Lemma~\ref{LEMnonseplots}, (\ref{enumXomegaopbase}) implies (\ref{enumXsepomega1opbase}).  Hence, it suffices to show that (\ref{enumXsepomega1opbase}) implies (\ref{enumXmetric}).  Suppose $X$ has a countable dense subset $D$ and an \cardop{\omega_1}\ base.  We then have $\piweight{X}=\omega$; hence, by Proposition~\ref{PROowpiw}, $\weight{X}=\omega$; hence, $X$ is metrizable.
\end{proof}

See Example~\ref{EXlindom1} for a nonseparable Lindel\"of linear order that has Noetherian type $\oml$.

\section{Small Noetherian types and smaller densities}

For compact \lots s, the theorem at the end of this section strengthens Theorem~\ref{THMlindeloflots}.  To prepare for this theorem, we first prove our main technical lemma, which we state in very general terms.

\begin{lemma}\label{LEMmucpctlotsdensity}
Suppose $\kappa$ is a regular uncountable cardinal, $\mu$ is an infinite cardinal, $\card{\lambda^{<\mu}}<\kappa$ for all $\lambda<\kappa$, $X$ is a product of fewer than $\kappa$\many\ $\mu$\nbd-compact GO-spaces, and $\ow{X}\leq\kappa$.  We then have $\density{X}<\kappa$.
\end{lemma}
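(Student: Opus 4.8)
The plan is to argue the contrapositive within a fixed base: assuming $\density{X}\geq\kappa$, I show that \emph{no} base of $X$ is \kappaop, whence $\ow{X}>\kappa$, contradicting the hypothesis. First I record two consequences of the arithmetic assumption $\card{\lambda^{<\mu}}<\kappa$ for all $\lambda<\kappa$. It forces $\mu<\kappa$ (otherwise $2^{<\mu}\geq\kappa$ since $\kappa$ is regular), and it lets me build elementary submodels $M\elemsub H_\theta$ with $\card{M}<\kappa$, $M\cap\kappa\in\kappa$, and $M^{<\mu}\subseteq M$. The point of the closure $M^{<\mu}\subseteq M$ is that in a $\mu$\nbd-compact GO-space every gap and every end has cofinality and coinitiality less than $\mu$, so a witnessing cofinal set of size less than $\mu$ that lies in $M$ is automatically a subset of $M$; this is the mechanism that will pin down convex pieces inside the submodel.

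Fix a base $\mcB$ of $X$. Mirroring Lemma~\ref{LEMnonseplots}, but indexed by $\kappa$ rather than $\om$, I recursively build covers $\mcB_\alpha$ of $X$ for $\alpha<\kappa$ by basic open boxes, together with labels $A_{\alpha,k}\in\mcB$ satisfying $B_{\alpha,k}\subseteq A_{\alpha,k}$. Each $\mcB_\alpha$ refines the pointwise intersections of the boxes chosen at earlier levels through a strict shrinking step $A_{\alpha,k}\subsetneq V_p$, is pairwise $\subseteq$\nbd-incomparable, and satisfies the bookkeeping invariant that a single element of $\mcB$ serves as a label at two distinct levels only when it is a singleton. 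To produce these covers I need the $\mu$\nbd-compact analogue of Lemma~\ref{LEMlocfinlindeloflots}: since only finitely many coordinates of a box are constrained and each factor is $\mu$\nbd-compact, I refine coordinatewise into covers whose pieces are unions of fewer than $\mu$ open convex boxes and which are locally of size less than $\mu$. Establishing this product analogue is part of the work, but it follows the same pattern as Lemma~\ref{LEMlocfinlindeloflots} with ``countable'' replaced by ``of size less than $\mu$''.

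With the covers in hand, the endgame is to exhibit one box $U\in\mcB$ with $\kappa$ distinct supersets among the labels. Take an increasing continuous chain $\la M_\alpha:\alpha<\kappa\ra$ of submodels as above with the entire construction in $M_0$, and set $M=\bigcup_{\alpha<\kappa}M_\alpha$; since $\mu<\cf\kappa=\kappa$, $M$ is again closed under sequences of length less than $\mu$. If $\density{X}>\kappa$, then $\card{M}=\kappa<\density{X}$, so $M\cap X$ is not dense, and I may choose a nonempty box whose points avoid $M$; inside it I choose $U\in\mcB$ around a point $c$ with a convex neighborhood $(a,e)$ disjoint from $M$. Every level's data lies in $M$, and $c$ lies in some piece $B_{\alpha,k_\alpha}$ of each cover $\mcB_\alpha$; the cofinality argument (any endpoint of the convex piece at $c$ lying strictly inside $(a,e)$ would, by $\mu$\nbd-compactness, force a point of $M$ into $(a,e)$) gives $(a,e)\subseteq A_{\alpha,k_\alpha}$, and the incomparability invariant makes these $\kappa$ labels pairwise distinct. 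Thus $U$ has $\kappa$ supersets and $\mcB$ is not \kappaop.

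The hard part is the remaining case $\density{X}=\kappa$, which is exactly the ``sup versus max'' phenomenon the lemma is designed to detect and where all three hypotheses must interact. Here $\card{M}=\kappa$ may make $M\cap X$ dense, so the single escaping neighborhood above need not exist, and any submodel of size less than $\kappa$ sees fewer than $\kappa$ of the levels, so the naive count yields only fewer than $\kappa$ supersets. I expect to resolve this by a reflection and pressing\nbd-down argument along the chain: at each $\alpha<\kappa$ the model $M_\alpha$ fails to be dense (as $\card{M_\alpha}<\kappa=\density{X}$), producing an escaping point and a box $U_\alpha\in M_{\alpha+1}\setminus M_\alpha$; using regularity of $\kappa$ together with $\card{\lambda^{<\mu}}<\kappa$ to control how this data spreads, I aim to find a single $U$ and a stationary (hence size\nbd-$\kappa$) set of levels on which the piece containing the corresponding escaping point is labeled by a superset of $U$. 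Forcing cofinally many escaping witnesses to share one base box while keeping the labels distinct is the crux, and is precisely where $\mu$\nbd-compactness (short gap cofinalities), the smallness of $\lambda^{<\mu}$, and the regularity of $\kappa$ are all needed. The reduction to this core is routine: by the note following Proposition~\ref{PROowpiw} the finitely supported boxes give $\piweight{X}=\density{X}$ once $\density{X}\geq\kappa>\card{I}$, and a product of factors each of density less than $\kappa$ has density less than $\kappa$ because $\card{I}<\kappa$ and $\kappa$ is regular.
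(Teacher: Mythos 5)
There is a genuine gap, and by your own admission it sits at the heart of the lemma: the case $\density{X}=\kappa$ is not proved but only conjectured (``I expect to resolve this by a reflection and pressing-down argument\dots is the crux''). Since the contradiction hypothesis is exactly $\density{X}\geq\kappa$, this case cannot be quarantined; and your correct observation that a submodel of size less than $\kappa$ sees fewer than $\kappa$\many\ levels shows that your overall strategy --- exhibit $\kappa$\many\ distinct supersets of a single base element by a length-$\kappa$ recursion of labeled covers, transplanted from Lemma~\ref{LEMnonseplots} --- cannot close this case. The paper never faces the dichotomy, because it never counts supersets of a fixed element at all. It fixes a \kappaop\ base $\mcU$ and a \emph{single} $M\elemsub H_\theta$ with $\card{M}<\kappa$, $M\cap\kappa\in\kappa$, and $M^{<\mu}\subseteq M$; since $\density{X}\geq\kappa>\card{M}$, already this small model fails to be dense, and one finds a finite subproduct and a box $B=\prod_{i\in\sigma}B_i$ with each $B_i$ a \emph{maximal} convex subset of $X_i$ disjoint from $M$. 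After the cofinality analysis (which your sketch does correctly anticipate: $\mu$\nbd-compactness yields $x_i=\min B_i$ and $y_i=\sup B_i>x_i$), the pivot you are missing is this: choosing $U\in\mcU$ with $x_i\in\pi_i[U]\subseteq(-\infty,y_i)$ and interval endpoints $u_i<p_i<q_i<x_i$ lying in $M$ (available by maximality of $B_i$), the family of supersets of $\bigcap_{i\in\sigma}\pi_i^{-1}[(u_i,q_i)]$ in $\mcU$ is an element of $M$ of size less than $\kappa$, hence --- because $M\cap\kappa\in\kappa$ --- a \emph{subset} of $M$, so $U\in M$. Elementarity then forces $\sup_{Y_i}(\pi_i[U])=y_i$ for every admissible $U$, and one derives an infinite increasing sequence contained in every neighborhood of $x_i$, contradicting the GO structure. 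In short, \kappaop-likeness is used to pull $U$ \emph{into} $M$, not to produce $\kappa$\many\ supersets; that single trick dissolves your ``sup versus max'' crux and handles $\density{X}=\kappa$ and $\density{X}>\kappa$ uniformly.

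Separately, even the half of your argument you regard as routine rests on an unproven ``product analogue of Lemma~\ref{LEMlocfinlindeloflots}'' producing, at each of $\kappa$\many\ stages, covers of the whole product that are locally of size less than $\mu$ by pieces that are unions of fewer than $\mu$ convex boxes. This does not ``follow the same pattern'': the proof of Lemma~\ref{LEMlocfinlindeloflots} is intrinsically one-dimensional (convexity in a single linear order controls which pieces can meet), and the relevant covering properties are notoriously non-productive --- the square of the Sorgenfrey line is a product of two Lindel\"of GO-spaces that is not Lindel\"of, and for uncountable $\mu$, $\mu$\nbd-compactness of the factors does not pass to the product, so the local-finiteness/refinement machinery you invoke is simply not available. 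The paper's proof needs no covers of $X$ whatsoever: it localizes to one basic box in a finite subproduct and argues coordinate by coordinate. The correct repair is therefore not to patch the recursion but to abandon it in favor of the single-submodel argument above.
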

\begin{proof}
Let $X=\prod_{i<\nu}X_i$ where $\nu<\kappa$ and each $X_i$ is a $\mu$\nbd-compact subspace of a \lots\ $Y_i$.  Seeking a contradiction, suppose $\density{X}\geq\kappa$.  Let $\mcU$ be a \kappaop\ base of $X$, $\{\la Y_i,\leq_{Y_i},X_i:i<\nu\ra,\mcU\}\in M\elemsub H_\theta$, $\card{M}<\kappa$, $M\cap\kappa\in\kappa$, and $M^{<\mu}\subseteq M$.  (We can construct $M$ as the union of an appropriate elementary chain of length $\rho$, where $\rho$ is the least regular cardinal $\geq\mu$.  Such an $M$ is not too large because $\rho<\kappa$, a fact that follows from $\mu\leq\card{2^{<\mu}}<\kappa$ and $\cf(\mu)<\mu\Rightarrow\mu^+\leq\card{\mu^{\cf(\mu)}}<\kappa$).  Since $\density{X}>\card{M}$, there is a finite subproduct $\prod_{i\in\sigma}X_i$ of $X$ that has a nonempty open subset disjoint from $M$.  We may choose this open subset to be the interior of a set of the form $B=\prod_{i\in\sigma}B_i$ where each $B_i$ is maximal among the convex subsets of $X_i$ disjoint from $M$.  Set $A_i=\{p\in X_i: p<B_i\}$ and $C_i=\{p\in X_i: p>B_i\}$.  Since $\{p:A_i<p<C_i\}=B_i$, which is nonempty but disjoint from $M$, we have $\{A_i,C_i\}\not\subseteq M$ by elementarity.

\begin{claim}
$\max\{\cf(A_i),\ci(C_i)\}\geq\mu$ for all $i\in\sigma$.  
\end{claim}
\begin{proof}
Seeking a contradiction, suppose $\cf(A_i)<\mu$ and $\ci(C_i)<\mu$.  
We then have $A_i\cap M,C_i\cap M\in M$.  Since $A_i\cap M$ is cofinal in $A_i$, $A_i=\{p:\exists q\in A_i\cap M\ \,p\leq q\}\in M$.  Likewise, $C_i\in M$, in contradiction with the fact that $\{A_i,C_i\}\not\subseteq M$.
\end{proof}

Therefore, \wma\ that $\cf(A_i)\geq\mu$ for all $i\in\sigma$ (by symmetry).  Since $X$ is $\mu$\nbd-compact, there exists $x_i=\sup_{Y_i}(A_i)=\min(B_i)\in X_i$ for all $i\in\sigma$.  

\begin{claim}
There exists $y_i=\sup_{Y_i}(B_i)\in(x_i,\infty)$ for all $i\in\sigma$, with the understanding that in this proof all intervals are intervals of $X_i$ (so $y_i\in X_i$).
\end{claim}
\begin{proof}
If $\ci(C_i)\geq\mu$, then, by $\mu$\nbd-compactness, there exists $y_i=\inf_{Y_i}(C_i)\in X_i$.  In this case, $y_i$ is also $\max(B_i)$ because $y_i\not\in C_i$.  Moreover, $y_i=\max(B_i)>\min(B_i)=x_i$ because otherwise the interior of $B$ would be empty, for $x_i=\sup_{Y_i}(A_i)$, which is not an isolated point in $X_i$.  If $\ci(C_i)<\mu$, then $C_i\in M$, just as in the previous claim's proof, so there exists $D_i\in M$ such that $D_i$ is a cofinal subset of $\{p\in X_i: p<C_i\}$ of minimal size.  In this case, $D_i$ includes a cofinal subset of $B_i$, so $D_i\not\subseteq M$, so $\card{D_i}\geq\kappa$, so $\mu<\kappa\leq\card{D_i}=\cf(B_i)$, so there exists $y_i=\sup_{Y_i}(B_i)\in X_i$ by $\mu$\nbd-compactness.  Also, $\cf(B_i)\geq\kappa$ implies $\sup_{Y_i}(B_i)>\min(B_i)=x_i$.  Thus, in any case there exists $y_i=\sup(B_i)\in(x_i,\infty)$ for all $i\in\sigma$.
\end{proof}

Let $U\in\mcU$ satisfy $x_i\in \pi_i[U]\subseteq(-\infty,y_i)$ for all $i\in\sigma$.  Since $\cf(A_i)\geq\mu>1$ for all $i\in\sigma$, we then have $\la x_i:i\in\sigma\ra\in\prod_{i\in\sigma}W_i\subseteq \pi_\sigma[U]$ where each $W_i$ is of the form $(u_i,v_i)$ or $(u_i,v_i]$ for some $u_i<x_i$ and $v_i\leq y_i$; \wma\ $u_i\in M$.  Moreover, there exist $p_i,q_i\in X_i\cap M$ such that $u_i<p_i<q_i<x_i$.  Since $\mcU$ is a \kappaop\ base, it includes fewer than $\kappa$\many\ supersets of $\bigcap_{i\in\sigma}\pi_i^{-1}[(u_i,q_i)]$ as members.  Since the set of supsersets of $\bigcap_{i\in\sigma}\pi_i^{-1}[(u_i,q_i)]$ in $\mcU$ is a set in $M$ and a set of size less than $\kappa$, it is also a subset of $M$.  In particular, $U\in M$.  

Fix an arbitrary $i\in\sigma$.  If $\cf(\pi_i[U])<\mu$, then $M$ would include a cofinal subset of $\pi_i[U]$, in contradiction with $B_i$ missing $M$.  Therefore, $\cf(\pi_i[U])\geq\mu$.  Hence, there exists $z=\sup_{Y_i}(\pi_i[U])$.  By elementarity, $z\in M$, so $B_i<z$, so $z=\min(C_i)=\sup_{Y_i}(B_i)=y$.  Because of the freedom in how we chose $U$, it follows that every neighborhood of $x_i$ includes a neighborhood that, like $\pi_i[U]$, has supremum $y_i$ (in $Y_i$) and has cofinality at least $\mu$.  Therefore, there is an infinite increasing sequence of points between $x_i$ and $y_i$ that are contained in every neighborhood of $x_i$, in contradiction with $X_i$ being a subspace of the ordered space $Y_i$.  Thus, $\density{X}<\kappa$.
\end{proof}

\begin{corollary}
Suppose that $X$ is a product of at most $2^{\aleph_0}$\many\ Lindel\"of GO-spaces such that $\ow{X}\leq\left(2^{\aleph_0}\right)^+$.  We then have $\density{X}\leq 2^{\aleph_0}$.
\end{corollary}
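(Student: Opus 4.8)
The plan is to obtain this corollary as an immediate application of Lemma~\ref{LEMmucpctlotsdensity} with the parameters $\kappa=\left(2^{\aleph_0}\right)^+$ and $\mu=\omega_1$. First I would observe that a GO-space is Lindel\"of exactly when it is $\omega_1$\nbd-compact: the requirement that every open cover have a countable subcover is precisely the requirement that every open cover have a subcover of size less than $\omega_1$. Hence each factor of $X$ is an $\omega_1$\nbd-compact GO-space, and taking $\mu=\omega_1$ is legitimate.

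Next I would verify the remaining hypotheses of the lemma for this $\kappa$. As a successor cardinal, $\kappa=\left(2^{\aleph_0}\right)^+$ is regular and uncountable. The index set of the product has size at most $2^{\aleph_0}<\kappa$, so $X$ is a product of fewer than $\kappa$\many\ $\mu$\nbd-compact GO-spaces, and the assumption $\ow{X}\leq\left(2^{\aleph_0}\right)^+$ is exactly $\ow{X}\leq\kappa$.

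The only clause demanding a small computation is $\card{\lambda^{<\mu}}<\kappa$ for all $\lambda<\kappa$. Since $\mu=\omega_1$, every cardinal below $\mu$ is at most $\aleph_0$, so $\lambda^{<\omega_1}=\lambda^{\aleph_0}$. For $\lambda<\kappa$ we have $\lambda\leq 2^{\aleph_0}$, and therefore $\lambda^{\aleph_0}\leq\left(2^{\aleph_0}\right)^{\aleph_0}=2^{\aleph_0}<\kappa$. This confirms the last hypothesis, so Lemma~\ref{LEMmucpctlotsdensity} yields $\density{X}<\kappa$, that is, $\density{X}\leq 2^{\aleph_0}$. As every step is a routine check of a hypothesis, I foresee no genuine obstacle; the only points worth stating carefully are the identification of Lindel\"ofness with $\omega_1$\nbd-compactness and the evaluation $\lambda^{<\omega_1}=\lambda^{\aleph_0}$ underlying the cardinal\nbd-arithmetic bound.
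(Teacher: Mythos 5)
Your proposal is correct and is exactly the intended derivation: the paper states this corollary without proof as an immediate consequence of Lemma~\ref{LEMmucpctlotsdensity}, instantiated just as you do with $\kappa=\left(2^{\aleph_0}\right)^+$ and $\mu=\omega_1$. Your verification of the hypotheses --- Lindel\"of equals $\omega_1$\nbd-compact under the paper's definition, $\kappa$ regular as a successor, fewer than $\kappa$\many\ factors, and $\card{\lambda^{<\omega_1}}=\lambda^{\aleph_0}\leq 2^{\aleph_0}<\kappa$ for all $\lambda<\kappa$ --- is accurate and complete.
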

\begin{corollary}\label{CORcpctlotsdensity}
Suppose that  $\kappa$ is a regular uncountable cardinal, $X$ is a product of less than $\kappa$\many\ \locpcta, and $\ow{X}\leq\kappa$.  We then have $\density{X}<\kappa$.
\end{corollary}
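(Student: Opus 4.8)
The plan is to obtain this corollary as the special case $\mu=\omega$ of Lemma~\ref{LEMmucpctlotsdensity}. First I would observe that every \locpct\ is an $\omega$\nbd-compact GO\nbd-space: it is a LOTS and hence a GO\nbd-space, and being compact, every open cover of it has a finite subcover, that is, a subcover of size less than $\omega$. Thus $X$ is a product of fewer than $\kappa$\many\ $\omega$\nbd-compact GO\nbd-spaces, and the remaining hypotheses---$\kappa$ regular uncountable and $\ow{X}\leq\kappa$---carry over verbatim from the corollary to the lemma.

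The only hypothesis of the lemma still requiring attention is the cardinal\nbd-arithmetic condition $\card{\lambda^{<\mu}}<\kappa$ for all $\lambda<\kappa$, instantiated at $\mu=\omega$. For finite $\lambda$ the set $\lambda^{<\omega}$ is at most countable, so $\card{\lambda^{<\omega}}\leq\omega<\kappa$ because $\kappa$ is uncountable; for infinite $\lambda<\kappa$ we have $\lambda^{<\omega}=\sup_{n<\omega}\lambda^n=\lambda<\kappa$. Hence the condition holds for every $\lambda<\kappa$, so Lemma~\ref{LEMmucpctlotsdensity} applies with $\mu=\omega$ and yields $\density{X}<\kappa$, as desired.

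Since the argument reduces to a single specialization of the main lemma, I expect no genuine obstacle here. The only nontrivial check is the arithmetic condition, and it is worth noting that this is exactly where the uncountability of $\kappa$ is consumed in the corollary: at $\mu=\omega$ the bound $\card{\lambda^{<\omega}}<\kappa$ for finite $\lambda$ amounts to $\omega<\kappa$, which would fail were $\kappa=\omega$.
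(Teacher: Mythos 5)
Your proposal is correct and is exactly how the paper intends this corollary to be read: the paper states it without proof as an immediate specialization of Lemma~\ref{LEMmucpctlotsdensity} at $\mu=\omega$, since a \locpct\ is an $\omega$\nbd-compact GO\nbd-space and $\card{\lambda^{<\omega}}<\kappa$ holds for all $\lambda<\kappa$ when $\kappa$ is regular and uncountable. Your verification of the arithmetic condition, including the remark that this is where uncountability of $\kappa$ is used, is accurate and complete.
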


The last corollary fails for singular $\kappa$.  As we shall see in Theorem~\ref{THMkappalots}, if $\lambda$ is an uncountable singular cardinal, then $\ow{\lambda+1}=\lambda$, despite the fact that $\density{\kappa+1}=\kappa$ for all infinite cardinals $\kappa$.  Moreover, the space $(\kappa+1)^\kappa$ has Noetherian type $\om$ and density $\kappa$ for all infinite cardinals $\kappa$, so we cannot weaken the above hypothesis that $X$ has less than $\kappa$\many\ factors.  The equation $\ow{(\kappa+1)^\kappa}=\om$ follows from a general theorem of Malykhin.

\begin{theorem}\label{THMowproductweight}\cite{malykhin}
Let $X=\prod_{i\in I}X_i$ where each $X_i$ has a minimal open cover of size two (\eg\ $X_i$ is $T_1$).  If $\sup_{i\in I}\weight{X_i}\leq\card{I}$, then $\ow{X}=\omega$.
\end{theorem}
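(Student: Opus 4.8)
The plan is to build for $X=\prod_{i\in I}X_i$ a base that is combinatorially as rigid as the standard clopen base of the Cantor cube $2^\kappa$, where $\kappa=\card{I}$, so that each basic set has only \emph{finitely} many supersets in the base and the base is therefore trivially \omegaop. Since $\ow{X}\geq\om$ always holds, exhibiting such a base suffices. Throughout set $\kappa=\card{I}$; the hypotheses force $\kappa\geq\om$, since $\weight{X_i}\geq\om$ and $\weight{X_i}\leq\card{I}$. For each $i$ I fix a base $\mcB_i$ with $\card{\mcB_i}\leq\kappa$, closed under finite intersections and containing $X_i$, together with the given minimal open cover $\{U_i,V_i\}$ of $X_i$. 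Minimality gives that $U_i$ and $V_i$ are proper and $\subseteq$\nbd-incomparable (for instance $U_i\subseteq V_i$ would make $\{V_i\}$ a subcover), a rigidity I will exploit heavily.

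The model to keep in mind is $2^\kappa$, whose basic clopen sets are coded by finite partial functions $s\in\Fn{I}{2}$, where $[s]\subseteq[t]$ exactly when $t\subseteq s$, so that each $[s]$ has only $2^{\card{s}}$ supersets. The key step is to manufacture the same phenomenon for arbitrary factors. First I would use the bound $\sup_{i\in I}\weight{X_i}\leq\card{I}$ to attach injectively to each proper member $B$ of each $\mcB_i$ a private ``carrier coordinate'' $j(i,B)\in I$; this is possible precisely because $\card{\bigcup_i\mcB_i}\leq\kappa\cdot\kappa=\kappa=\card{I}$. A member of my base will then be coded by a finite partial function assigning to finitely many coordinates a label from $\{U,V\}$, where a label at a coordinate $c$ both constrains coordinate $c$ to the corresponding cover half $U_c$ or $V_c$ \emph{and}, when $c=j(i,B)$, intersects coordinate $i$ with the fine set $B$. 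Crucially, the label \emph{is} the cover constraint, so no carrier needs its own carrier and there is no regress. In this way a refinement inside a single coordinate---which in a naive box base would create infinitely many coarsenings and destroy the \omegaop\ property---is instead recorded as the \emph{addition} of one flagged carrier, converting the dangerous ``infinitely many supersets within one factor'' into the safe ``finitely many subfunctions of a finite code.''

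Granting such a coding, that the resulting family $\mcA$ is \omegaop\ should follow as in the $2^\kappa$ computation. Because $j$ is injective, each coordinate's value is a finite intersection dictated by the whole finite code, so the code determines the set up to finitely many choices; hence any $A'\in\mcA$ with $A\subseteq A'$ must arise from a code obtained by deleting finitely many labels from the code of $A$. Here the $\subseteq$\nbd-incomparability of $U_c$ and $V_c$ is exactly what forbids enlarging $A$ by switching a label to its opposite rather than deleting it; since a finite code has only finitely many subcodes, $A$ has only finitely many supersets in $\mcA$.

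The hard part will be the bookkeeping that simultaneously secures the base property and this superset bound. For the base property, given $x\in X$ and a target basic neighborhood, I activate the relevant fine sets and must choose each carrier's label $U$ or $V$ to match the half of the cover that contains the corresponding coordinate of $x$; that such a consistent choice always exists uses exactly that $\{U_c,V_c\}$ covers $X_c$. The subtle point is that a single coordinate may at once be a home coordinate, carry its own label, and be the carrier of another fine set, so that its value is a finite intersection governed by the entire code; I must check that these intersections still shrink to a neighborhood base at $x$, which is where the properness of the $U_c,V_c$ and the finite-intersection closure of $\mcB_c$ do the work. For the superset bound, the same overlapping of roles means I must verify that enlarging a basic set is possible only by deleting labels and never by loosening one, again relying on incomparability. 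I expect this interplay---the covers supplying label freedom for the base property while their incomparability rigidifies the superset order, and the weight bound supplying enough carriers---to be the technical heart of the argument.
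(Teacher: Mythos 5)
Your underlying mechanism is the same one the paper uses: exploit $\sup_{i\in I}\weight{X_i}\leq\card{I}$ to tag basic sets injectively with ``private'' coordinates, and stamp each tagged set with one half of the minimal two-element cover at its private coordinate, so that a nonempty open set---being a box with finite support---can lie inside proper cover halves at only finitely many coordinates. The paper, however, applies the tag at the coarsest possible level: choose a base $\mcA$ of the whole product $X$ with $\card{\mcA}\leq\card{I}$ (possible since $\weight{X}=\sup_{i\in I}\weight{X_i}\cdot\card{I}\leq\card{I}$), fix an injection $f\colon\mcA\to I$, and take $\mcB$ to be the nonempty sets $V\cap\inv{\pi}_{f(V)}\left[U_{f(V),j}\right]$ for $V\in\mcA$, $j<2$. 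If some element of $\mcB$ had infinitely many supersets in $\mcB$, these would involve infinitely many distinct $V\in\mcA$, hence infinitely many distinct coordinates $f(V)$, forcing a set with nonempty interior inside proper halves at infinitely many coordinates---impossible. Your per-factor ``carrier'' coding reproduces this at finer granularity, which is precisely what generates all the bookkeeping you flag as the technical heart; the paper's version makes that bookkeeping vanish entirely.

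Within your finer scheme there is a concrete flaw, though a repairable one: it is \emph{not} true that a superset $A_t\supseteq A_s$ must come from a subcode $t\subseteq s$, and incomparability of $U_c,V_c$ does not forbid the alternatives. If a carrier in $\dom(s)$ imposes a fine set $B$ at coordinate $i$ with $B\subseteq U_i$ and $i\notin\dom(s)$, then the code $t=\{(i,U)\}$ gives $A_t\supseteq A_s$ with $t\not\subseteq s$; similarly, a label at $c$ can be switched whenever the full trace $U_c\cap B$ happens to lie inside $V_c$, which incomparability of the bare halves does not exclude, since the comparison is against a trace cut down by fine sets, not against $U_c$ itself. You have also swapped the roles of your two hypotheses: the base property needs only that $\{U_c,V_c\}$ \emph{covers} $X_c$ (properness and finite-intersection closure do no work there), while the superset bound needs only \emph{properness} $U_c,V_c\neq X_c$, which minimality supplies. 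The correct count, which rescues your construction without any deletion-only structure: a nonempty $A_s$ is an open box whose trace is proper at only finitely many coordinates, say the finite set $F$; if $A_t\supseteq A_s$, then for each $c\in\dom(t)$ we get $A_s\subseteq\inv{\pi}_c[\text{half}]$ with the half proper, so $c\in F$; hence $\dom(t)\subseteq F$ and there are at most $3^{\card{F}}$ candidate codes, so $A_s$ has finitely many supersets and the base is \omegaop. With that substitution your argument closes, but it is then visibly a decorated form of the paper's three-line proof.
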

\begin{proof}
For each $i\in I$, let $\{U_{i,0},U_{i,1}\}$ be a minimal open cover of $X_i$.  Since $\weight{X}=\sup_{i\in I}\weight{X_i}$, we may choose $\mcA$ to be a base of $X$ of size at most $\card{I}$ and choose an injection $f\colon \mcA\rightarrow I$.  Let $\mcB$ denote the set of all nonempty sets of the form $V\cap\inv{\pi}_{f(V)}\left[U_{f(V),j}\right]$ where $V\in\mcA$ and $j<2$.  Since $f$ is injective, every infinite subset of $\mcB$ has empty interior.  Hence, $\mcB$ is an \omegaop\ base of $X$.
\end{proof}

\begin{theorem}\label{THMcpctlots}
Let $X$ be a product of countably many \locpcta.  The following are equivalent.
\begin{enumerate}
\item\label{enumcpctXmetric} $X$ is metrizable.
\item\label{enumcpctXomegaopbase} $X$ has an \omegaop\ base.
\item\label{enumcpctXomega1opbase} $X$ has an \cardop{\omega_1}\ base.
\item\label{enumcpctXsepomega1opbase} $X$ is separable and has an \cardop{\omega_1}\ base.
\end{enumerate}
\end{theorem}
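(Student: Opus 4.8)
The plan is to prove the cycle of implications $(\ref{enumcpctXmetric}) \Rightarrow (\ref{enumcpctXomegaopbase}) \Rightarrow (\ref{enumcpctXomega1opbase}) \Rightarrow (\ref{enumcpctXsepomega1opbase}) \Rightarrow (\ref{enumcpctXmetric})$, so that all four conditions become equivalent. Throughout I would keep in mind that $X$, being a product of countably many compact spaces, is itself compact Hausdorff by Tychonoff's theorem, hence Lindel\"of, normal, and regular; this observation is only needed at the very end. The first two implications are immediate. The implication $(\ref{enumcpctXmetric}) \Rightarrow (\ref{enumcpctXomegaopbase})$ is exactly Theorem~\ref{THMallmetric}, and $(\ref{enumcpctXomegaopbase}) \Rightarrow (\ref{enumcpctXomega1opbase})$ is the trivial monotonicity that an \omegaop\ base is a fortiori \cardop{\omega_1}: an element with only finitely many supersets in the base in particular has fewer than $\omega_1$ of them.

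The implication $(\ref{enumcpctXomega1opbase}) \Rightarrow (\ref{enumcpctXsepomega1opbase})$ is where the technical work of the previous section is spent, but it requires no new argument on my part. Condition $(\ref{enumcpctXomega1opbase})$ says precisely that $\ow{X} \leq \omega_1$, and $\omega_1$ is a regular uncountable cardinal, so I would apply Corollary~\ref{CORcpctlotsdensity} with $\kappa = \omega_1$, noting that $X$ is a product of countably many (hence fewer than $\omega_1$) \locpcta. The corollary then yields $\density{X} < \omega_1$, i.e.\ $X$ is separable, so $(\ref{enumcpctXsepomega1opbase})$ holds.

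For the final implication $(\ref{enumcpctXsepomega1opbase}) \Rightarrow (\ref{enumcpctXmetric})$ I would adapt the last step of the proof of Theorem~\ref{THMlindeloflots}. Since $X$ is separable, it is a product of countably many GO-spaces, hence of at most $\density{X}$\many\ GO-spaces, so the construction noted after Proposition~\ref{PROowpiw} gives $\piweight{X} = \density{X} = \omega$. Suppose toward a contradiction that $X$ is not metrizable; as $X$ is compact Hausdorff, hence regular, this forces $\weight{X} \geq \omega_1$. Then $\piweight{X} = \omega < \omega_1 = \cf(\omega_1) \leq \weight{X}$, so Proposition~\ref{PROowpiw} with $\kappa = \omega_1$ gives $\ow{X} > \omega_1$, contradicting condition $(\ref{enumcpctXsepomega1opbase})$. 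Hence $\weight{X} = \omega$, and a compact Hausdorff space of countable weight is metrizable.

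The main point is that there is essentially no obstacle left in this assembly: all the genuine content lives in Lemma~\ref{LEMmucpctlotsdensity} and its Corollary~\ref{CORcpctlotsdensity}, which is exactly what lets me drop the separability hypothesis that Theorem~\ref{THMlindeloflots} required. The only care needed is in matching the hypotheses of the two imported results: Corollary~\ref{CORcpctlotsdensity} demands strictly fewer than $\kappa = \omega_1$ factors, which is precisely ``countably many,'' and Proposition~\ref{PROowpiw} demands the sandwich $\piweight{X} < \cf\kappa \leq \kappa \leq \weight{X}$, which the choice $\kappa = \omega_1$ satisfies as soon as $X$ fails to be second countable. I expect verifying these alignments to be the most delicate (though still routine) part of the write-up.
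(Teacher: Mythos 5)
Your proposal is correct and follows essentially the same route as the paper: the identical cycle $(\ref{enumcpctXmetric})\Rightarrow(\ref{enumcpctXomegaopbase})\Rightarrow(\ref{enumcpctXomega1opbase})\Rightarrow(\ref{enumcpctXsepomega1opbase})\Rightarrow(\ref{enumcpctXmetric})$, citing Theorem~\ref{THMallmetric}, Corollary~\ref{CORcpctlotsdensity} with $\kappa=\omega_1$, and Proposition~\ref{PROowpiw} at exactly the same points. Your write-up merely makes explicit a few details the paper leaves implicit (Tychonoff compactness and that a compact Hausdorff space of countable weight is metrizable), and these are verified correctly.
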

\begin{proof}
By Theorem~\ref{THMallmetric}, (\ref{enumcpctXmetric}) implies (\ref{enumcpctXomegaopbase}), which trivially implies (\ref{enumcpctXomega1opbase}).  By Corollary~\ref{CORcpctlotsdensity}, (\ref{enumcpctXomega1opbase}) implies (\ref{enumcpctXsepomega1opbase}).  Finally, (\ref{enumcpctXsepomega1opbase}) implies (\ref{enumcpctXmetric}) because if $X$ is separable, then $\piweight{X}=\om$, so $\weight{X}=\om$ by Proposition~\ref{PROowpiw}.
\end{proof}

\section{The Noetherian spectrum of the compact orders}\label{SECspectra}

Theorem~\ref{THMcpctlots} implies that no \locpct\ has Noetherian type $\omega_1$.  What is the class of Noetherian types of \locpcta?  We shall prove that an infinite cardinal $\kappa$ is the Noetherian type of a \locpct\ if and only if $\kappa\not=\omega_1$ and $\kappa$ is not weakly inaccessible.

\begin{theorem}\label{THMkappalots}
Let $\kappa$ be an uncountable cardinal and give $\kappa+1$ the order topology.  If $\kappa$ is regular, then $\ow{\kappa+1}=\kappa^+$; otherwise,  $\ow{\kappa+1}=\kappa$.
\end{theorem}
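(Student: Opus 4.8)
The plan is to separate the easy bounds from the one substantive combinatorial fact. Since $\kappa+1$ has $\kappa$ isolated points and a local base of size $\cf(\delta)\le\kappa$ at each of its $\kappa$ limit points, $\weight{\kappa+1}=\kappa$, so the universal bound $\ow{\cdot}\le\weight{\cdot}^+$ gives $\ow{\kappa+1}\le\kappa^+$. The heart of the matter is a lower bound, which I would isolate as a single claim: for every regular uncountable cardinal $\rho\le\kappa$, no base of $\kappa+1$ is \cardop{\rho}. Granting this, the regular case is immediate (take $\rho=\kappa$: no base is \kappaop, so $\ow{\kappa+1}>\kappa$, and with the upper bound $\ow{\kappa+1}=\kappa^+$), and for singular $\kappa$ the regular uncountable $\rho<\kappa$ are cofinal in $\kappa$, so the claim forces $\ow{\kappa+1}\ge\kappa$; it then remains only to construct a \kappaop\ base to conclude $\ow{\kappa+1}=\kappa$.

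For the claim I would use Fodor's lemma. Fix a regular uncountable $\rho\le\kappa$ and an arbitrary base $\mcB$. For each limit ordinal $\delta<\rho$, the set $(\gamma,\delta]=(\gamma,\delta+1)$ is an open neighborhood of $\delta$ for every $\gamma<\delta$, so I can pick $B_\delta\in\mcB$ with $\delta\in B_\delta\subseteq(\gamma,\delta]$; this forces $\max B_\delta=\delta$, so the $B_\delta$ are pairwise distinct. Since $B_\delta$ is open and contains the limit $\delta$, it also contains an interval $(\eta_\delta,\delta]$ with $\eta_\delta<\delta$. The assignment $\delta\mapsto\eta_\delta$ is regressive on the stationary set of limit ordinals below $\rho$, so Fodor's lemma yields a stationary $S\subseteq\rho$ and a fixed $\eta^*$ with $\eta_\delta=\eta^*$ for all $\delta\in S$. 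The successor $p=\eta^*+1$ is then an isolated point lying in $(\eta^*,\delta]\subseteq B_\delta$ for every $\delta\in S$. As $p$ is isolated, $\{p\}\in\mcB$, and its supersets in $\mcB$ include the $\card{S}=\rho$ distinct sets $B_\delta$ ($\delta\in S$); hence $\{p\}$ has at least $\rho$ supersets and $\mcB$ is not \cardop{\rho}. Note that this needs no convexity assumption on $\mcB$.

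For the remaining upper bound, let $\kappa$ be singular with $\mu=\cf(\kappa)<\kappa$ and fix an increasing sequence $\la\alpha_i:i<\mu\ra$ cofinal in $\kappa$ with $\alpha_0=0$ and each $\alpha_i$ ($i\ge1$) a successor ordinal, so that each block $P_i=[\alpha_i,\alpha_{i+1})$ is open and the blocks partition $[0,\kappa)$. Each $P_i$ is (homeomorphic to) an ordinal of cardinality less than $\kappa$, so $\weight{P_i}<\kappa$ and, $\kappa$ being a limit cardinal, $P_i$ has a \cardop{\theta_i}\ base $\mcB_i$ for some $\theta_i<\kappa$. I would take $\mcB=\{(\alpha_i,\kappa]:i<\mu\}\cup\bigcup_{i<\mu}\mcB_i$. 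This is a base (the top neighborhoods form a local base at $\kappa$, and the $\mcB_i$ handle the open blocks), and I would verify it is \kappaop\ by bookkeeping of supersets: a top neighborhood $(\alpha_i,\kappa]$ has only the at most $\mu$ larger top neighborhoods above it, since nothing in $\bigcup_{i<\mu}\mcB_i$ contains $\kappa$; a set $B\in\mcB_i$ has at most $\mu$ supersets among the top neighborhoods and fewer than $\theta_i$ within $\mcB_i$, and none in $\mcB_j$ for $j\ne i$ by disjointness of the blocks, for a total below $\kappa$.

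The main obstacle is the lower bound. The tempting first move, examining the chain of neighborhoods of the top point $\kappa$, fails: that chain is \cardop{\cf(\kappa)}\ on its own and produces no element with many supersets. The correct idea is that the obstruction lives not at the top but across the stationarily many limit points below $\rho$, where Fodor's lemma concentrates their small neighborhoods onto a single isolated point. Arranging the neighborhoods $B_\delta$ to be simultaneously distinct (via $\max B_\delta=\delta$) and downward-reaching to a common point (via the regressive $\eta_\delta$) is the delicate step, and handling it without assuming the base is convex is what makes the argument apply to an arbitrary base.
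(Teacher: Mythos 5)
Your proof is correct. Your lower bound is essentially the paper's own argument: the paper likewise fixes a regular $\lambda\leq\kappa$, chooses for each limit $\alpha<\lambda$ a base element $U_\alpha$ with $\max U_\alpha=\alpha$ and $[\eta(\alpha),\alpha]\subseteq U_\alpha$, applies the Pressing Down Lemma, and observes that the singleton $\{\eta(\min S)+1\}$, which every base must contain, sits below the $\lambda$\many\ $U_\alpha$, which are pairwise distinct for exactly the reason you make explicit (distinct maxima). (The paper also notes these lower bounds follow from Corollary~\ref{CORcpctlotsdensity}; you do not need that route.) Where you genuinely diverge is the \kappaop\ base for singular $\kappa$. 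The paper fixes a single unbounded $E\in[\kappa]^{<\kappa}$, lets $F$ be the set of limit points of $E$, and takes the one explicit family $\{(\beta,\alpha]:E\ni\beta<\alpha\in F\text{ or }\sup(E\cap\alpha)\leq\beta<\alpha\in\kappa\setminus F\}$, verifying \kappaop ness by counting interval inclusions directly. You instead decompose $[0,\kappa)$ into $\cf(\kappa)$\many\ pairwise disjoint clopen blocks of weight less than $\kappa$, apply the trivial bound $\ow{\cdot}\leq\weight{\cdot}^+$ blockwise, and adjoin a $\cf(\kappa)$-sized local base at the top point; disjointness of the blocks makes the superset bookkeeping essentially trivial. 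Both constructions use singularity in the same two ways ($\cf(\kappa)<\kappa$ for the top point, and $\kappa$ being a limit cardinal, which enters your argument as $\weight{P_i}^+<\kappa$ and the paper's as the choice $\card{E}<\kappa$), but your modular version trades the paper's single explicit family of convex sets for an easier verification, and it generalizes painlessly to any space that splits into $<\kappa$\many\ disjoint open pieces of weight $<\kappa$ plus one point of small character.
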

\begin{proof}
Using Corollary~\ref{CORcpctlotsdensity}, the lower bounds on $\ow{\kappa+1}$ are easy.  We have $\density{\kappa+1}\geq\lambda$ for all regular $\lambda\leq\kappa$, so $\ow{\kappa+1}>\lambda$ for all regular $\lambda\leq\kappa$.  It follows that $\ow{\kappa+1}\geq\kappa$ and $\ow{\kappa+1}>\cf\kappa$.  We can also prove these lower bounds directly using the Pressing Down Lemma.  Let $\mcA$ be a base of $\kappa+1$ and let $\lambda$ be a regular cardinal $\leq\kappa$.  Let us show that $\mcA$ is not \cardop{\lambda}.  For every limit ordinal $\alpha<\lambda$, choose $U_\alpha\in\mcA$ such that $\alpha=\max U_\alpha$; choose $\eta(\alpha)<\alpha$ such that $[\eta(\alpha),\alpha]\subseteq U_\alpha$.  By the Pressing Down Lemma, $\eta$ is constant on a stationary subset $S$ of $\lambda$.  Hence, $\mcA\ni\{\eta(\min S)+1\}\subseteq U_\alpha$ for all $\alpha\in S$; hence, $\mcA$ is not \cardop{\lambda}.  Once again, it follows that $\ow{\kappa+1}\geq\kappa$ and $\ow{\kappa+1}>\cf\kappa$.  

Trivially, $\ow{\kappa+1}\leq\weight{\kappa+1}^+=\kappa^+$.  Hence, it suffices to show that $\kappa+1$ has a \kappaop\ base if $\kappa$ is singular.  Suppose $E\in[\kappa]^{<\kappa}$ is unbounded in $\kappa$.  Let $F$ be the set of limit points of $E$ in $\kappa+1$.  Define $\mcB$ by
\begin{equation*}
\mcB=\{(\beta,\alpha]:E\ni\beta<\alpha\in F\text{ or }\sup(E\cap\alpha)\leq\beta<\alpha\in\kappa\setminus F\}.
\end{equation*}
The set $\mcB$ is a \kappaop\ base of $\kappa+1$.
\end{proof}


\begin{definition}
Given a poset $P$ with ordering $\leq$, let $P^\op$ denote the set $P$ with ordering $\geq$.
\end{definition}

\begin{theorem}
Suppose $\kappa$ is a singular cardinal.  There then is a \locpct\ with Noetherian type $\kappa^+$.
\end{theorem}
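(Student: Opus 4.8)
Since $\ow{X}\le\weight{X}^+$ always, any example $X$ with $\ow{X}=\kappa^+$ must have $\weight{X}\ge\kappa$; the most economical target is $\weight{X}=\kappa$, so that $\ow{X}$ attains the maximal value $\weight{X}^+$. The plan is therefore to build a compact LOTS of weight exactly $\kappa$ whose Noetherian type is as large as its weight permits. The obvious candidates do not work: by Theorem~\ref{THMkappalots} the chain $\kappa+1$ has $\ow{\kappa+1}=\kappa$, and more generally an ordinal can only supply ``pressing\nbd-down scales'' that are regular cardinals, each regular $\lambda<\kappa$ contributing at most $\lambda^+\le\kappa$ while the next available scale $\kappa^+$ overshoots to $\kappa^{++}$. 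One should also note that the double\nbd-arrow mechanism of Proposition~\ref{PROowpiw} is unavailable when $\kappa$ is a strong limit: that route needs $\piweight{X}<\cf\kappa$ together with $\weight{X}\ge\kappa$, but $\weight{X}\le 2^{\piweight{X}}<\kappa$ for such $\kappa$.

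Write $\mu=\cf\kappa$ and fix an increasing sequence $\la\kappa_i:i<\mu\ra$ of regular cardinals cofinal in $\kappa$, with $\mu<\kappa_0$ and $\kappa_i^+\le\kappa_{i+1}$, so that $\sup_{i<\mu}\kappa_i^+=\kappa$. The raw material is the family $\{\kappa_i+1:i<\mu\}$: since each $\kappa_i$ is regular, Theorem~\ref{THMkappalots} gives $\ow{\kappa_i+1}=\kappa_i^+$, i.e.\ in each factor \emph{every} base already contains a small set with $\kappa_i$ supersets, furnished by the Pressing Down Lemma applied to a stationary subset of $\kappa_i$. The construction I would pursue assembles these $\mu$ factors into a single compact LOTS $X$ of weight $\kappa$ in such a way that the $\mu$ separate pressing\nbd-down arguments are forced to accumulate on one base element. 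I would try a lexicographic product $\prod^{\tlex}_{i<\mu}(\kappa_i+1)$ or a nested ``onion'' of the $\kappa_i+1$'s, and then verify $\weight{X}=\kappa$ by counting jumps and exhibiting a dense set of size $\kappa$.

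The heart of the proof is the lower bound $\ow{X}>\kappa$: I must show that \emph{every} base $\mcA$ of $X$ contains an element lying below $\kappa$\many\ members, i.e.\ that no base is \cardop{\kappa}. For each $i<\mu$ the factor structure yields, via Fodor's lemma on $\kappa_i$, a small set $W_i$ with $\kappa_i$ supersets in $\mcA$; the decisive step is to arrange the order so that these may be collapsed onto a \emph{single} $W\in\mcA$ with $W\subseteq W_i$ for cofinally many $i$, whence $W$ lies below at least $\sup_i\kappa_i=\kappa$ members of $\mcA$. To manage the bookkeeping I would pass to an elementary submodel $M\elemsub H_\theta$ with $\card{M}<\kappa$ and suitable closure, exactly as in the proof of Lemma~\ref{LEMmucpctlotsdensity}, and read off the common refinement $W$ from the structure inside $M$.

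The main obstacle is precisely this collapse, and it is where the singularity of $\kappa$ fights back. Because $\kappa$ is singular there is no Fodor\nbd-type pressing down at $\kappa$ itself, so the $\kappa$ supersets cannot be produced by any single stationary\nbd-set argument; they can only be harvested level by level, and a naive assembly fails. Indeed the clever \cardop{\kappa}\ base built for $\kappa+1$ in Theorem~\ref{THMkappalots}, and its analogue for a plain lexicographic product, keep the sets $W_i$ in disjoint ``blocks'' so that their supersets never share a common lower bound, forcing $\ow{X}$ back down to $\kappa$. The real content of the theorem is thus a combinatorial device---most plausibly a scale in $\prod_{i<\mu}\kappa_i$ or a similar pcf\nbd-style structure available at singular cardinals---woven into the order so that in \emph{every} base the block\nbd-by\nbd-block supersets are pinned to one element. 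This is the same difficulty that keeps the analogous questions for dyadic compacta open, and I expect it to be the crux of the argument.
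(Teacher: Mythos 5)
There is a genuine gap, and it sits exactly where you say it does: the ``decisive step'' of pinning the level\nbd-by\nbd-level pressing\nbd-down arguments to a single base element is never carried out. You name two candidate orders (a lexicographic product $\prod^{\tlex}_{i<\mu}(\kappa_i+1)$ or a nested ``onion'') without analyzing either, you propose elementary submodels as bookkeeping without saying what they would extract, and you guess that the missing combinatorial device is a scale in $\prod_{i<\mu}\kappa_i$. No pcf\nbd-style structure is needed, and your target for the lower bound is also subtly misstated: the paper never accumulates supersets from cofinally many levels onto one $W$; it fixes a \emph{single} level $\beta$ and derives a contradiction there. What you correctly diagnosed---that keeping the blocks ``disjoint'' lets a \kappaop\ base survive, as on $\kappa+1$---is real, but the remedy is stationarity in a small regular cardinal, not a scale.

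The paper's construction: let $\lambda=\cf\kappa$, take the chain $\lambda^++1$, partition the limit ordinals of $\lambda^+$ into $\lambda$\many\ stationary sets $\la S_\alpha\ra_{\alpha<\lambda}$, fix regular $\kappa_\alpha\nearrow\kappa$, and at each $\beta\in S_\alpha$ glue in the \emph{reversed} block $Y_\beta=(\kappa_\alpha+1)^{\op}$ (singletons elsewhere), ordering the union $Y$ lexicographically; then $\ow{Y}\leq\card{Y}^+=\kappa^+$. For the lower bound, suppose $\mcA$ were a \kappaop\ base and let $\mcU_\alpha$ be the set of members of $\mcA$ with at least $\kappa_\alpha$\many\ supersets in $\mcA$. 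Each isolated point $p$ has $\{p\}\in\mcA$ with fewer than $\kappa$ supersets, so $p\notin\bigcup\mcU_\alpha$ for some $\alpha$; pigeonholing the $\lambda^+$\many\ isolated points $\la\alpha+1,0\ra$ against the $\lambda$\many\ levels yields a single $\beta<\lambda$ and a set $E$ of $\lambda^+$\many\ successor ordinals with $(E\times 1)\cap\bigcup\mcU_\beta=\varnothing$. Since $S_{\beta+1}$ is stationary, it meets the closure of $E$ at some $\gamma$, and the point $q=\la\gamma,\kappa_{\beta+1}\ra$ then lies in $\closure{E\times 1}$ (hence $q\notin\bigcup\mcU_\beta$) while having coinitiality $\kappa_{\beta+1}$, which forces the trace of $\mcA$ at $q$ to contain a member with $\kappa_\beta$\many\ supersets---a contradiction. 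The order reversal is the trick your sketch lacks: it places the point of large coinitiality immediately above a $\lambda^+$\nbd-indexed family of isolated points, and the stationary partition guarantees that whatever level $\beta$ a putative \kappaop\ base protects, a block of cofinality $\kappa_{\beta+1}$ is reachable as a limit of protected isolated points. Note also that the argument uses no elementary submodels, and that the whole structure lives on $\lambda^++1$ rather than on a length\nbd-$\mu$ product of all the $\kappa_i+1$'s.
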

\begin{proof}
Set $\lambda=\cf\kappa$ and $X=\lambda^++1$.  Partition the set of limit ordinals in $\lambda^+$ into $\lambda$\many\ stationary sets $\la S_\alpha\ra_{\alpha<\lambda}$.  Let $\la\kappa_\alpha\ra_{\alpha<\lambda}$ be an increasing sequence of regular cardinals with supremum $\kappa$.  For each $\alpha<\lambda$ and $\beta\in S_\alpha$, set $Y_\beta={(\kappa_\alpha+1)}^\op$.  For each $\alpha\in X\setminus\bigcup_{\beta<\lambda}S_\beta$, set $Y_\alpha=1$.  Set $Y=\bigcup_{\alpha\in X}\{\alpha\}\times Y_\alpha$ ordered lexicographically.  We then have $\ow{Y}\leq\weight{Y}^+\leq\card{Y}^+=\kappa^+$.  Hence, it suffices to show that $Y$ has no \kappaop\ base.

Seeking a contradiction, suppose $\mcA$ is a \kappaop\ base of $Y$.  For each $\alpha<\lambda$, let $\mcU_\alpha$ be the set of all $U\in\mcA$ that have at least $\kappa_\alpha$\many\ supersets in $\mcA$.  For all isolated points $p$ of $Y$, there exists $\alpha<\lambda$ such that $\{p\}\not\in\mcU_\alpha$; whence, $p\not\in\bigcup\mcU_\alpha$.  Since $\la\alpha+1,0\ra$ is isolated for all $\alpha<\lambda^+$, there exist $\beta<\lambda$ and a set $E$ of successor ordinals in $\lambda^+$ such that $\card{E}=\lambda^+$ and $(E\times 1)\cap\bigcup\mcU_\beta=\varnothing$.  Let $C$ be the closure of $E$ in $\lambda^+$.  The set $C$ is closed unbounded; hence, there exists $\gamma\in C\cap S_{\beta+1}$.  Set $q=\la\gamma,\kappa_{\beta+1}\ra$.  We then have $q\in\closure{E\times 1}$; hence, $q\not\in\bigcup\mcU_\beta$.  Since $q$ has coinitiality $\kappa_{\beta+1}$, any local base $\mcB$ at $q$ will contain an element $U$ such that $U$ has $\kappa_\beta$\many\ supersets in $\mcB$.  Hence, there exists $U\in\mcU_\beta$ such that $q\in U$; hence, $q\in\bigcup\mcU_\beta$, which yields our desired contradiction.
\end{proof}

\begin{theorem}\label{THMlcownotwi}
No \locpct\ has weakly inaccessible Noetherian type.  More generally, for every weakly inaccessible $\kappa$, products of fewer than $\kappa$\many\ \locpcta\ do not have Noetherian type $\kappa$.
\end{theorem}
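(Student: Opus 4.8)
The plan is to argue by contradiction, using Corollary~\ref{CORcpctlotsdensity} to pin down the density and Proposition~\ref{PROowpiw} to pin down the $\pi$\nbd-weight. Since the first sentence is the case $\nu=1$ of the ``more generally'' claim, it suffices to treat a product $X=\prod_{i<\nu}X_i$ of $\locpcta$ with $\nu<\kappa$, where $\kappa$ is weakly inaccessible; I would suppose toward a contradiction that $\ow{X}=\kappa$. Since $\ow{X}=\kappa$ is uncountable, $X$ is infinite, so all the cardinal functions below are well behaved.

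The first step is to apply Corollary~\ref{CORcpctlotsdensity}: $\kappa$ is regular and uncountable, $\nu<\kappa$, and $\ow{X}\leq\kappa$, so the corollary gives $\density{X}<\kappa$. The second step is to bound the $\pi$\nbd-weight by $\max(\nu,\density{X})$. The construction described after Proposition~\ref{PROowpiw} --- finitely supported products of open intervals whose endpoints are drawn from the coordinates of a dense set of size $\density{X}$ --- has at most $\max(\nu,\density{X})$ members (there are $\nu$ finite supports and at most $\density{X}$ endpoint choices per coordinate), and it is a $\pi$\nbd-base because each coordinate projection of the dense set is dense in the corresponding factor. Thus $\piweight{X}\leq\max(\nu,\density{X})<\kappa$. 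I would flag that the stated identity $\piweight{X}=\density{X}$ presumes $\nu\leq\density{X}$, which can genuinely fail here (for instance $\{0,1\}^{\oml}$ has more factors than its density), so the weaker inequality $\piweight{X}\leq\max(\nu,\density{X})$ is the version I actually need; verifying that this family really is a $\pi$\nbd-base is the one routine point to check.

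The final step is a dichotomy on $\weight{X}$, and this is where both halves of weak inaccessibility are used. If $\weight{X}\geq\kappa$, then $\piweight{X}<\cf\kappa\leq\kappa\leq\weight{X}$ (using $\cf\kappa=\kappa$ by regularity), so Proposition~\ref{PROowpiw} forces $\ow{X}>\kappa$, contradicting $\ow{X}=\kappa$. If instead $\weight{X}<\kappa$, then $\weight{X}^+<\kappa$ because $\kappa$ is a limit cardinal, so the trivial bound $\ow{X}\leq\weight{X}^+$ forces $\ow{X}<\kappa$, again contradicting $\ow{X}=\kappa$. I expect the main obstacle to be organizational rather than computational: one must notice that neither the regular\nbd-cardinal aspect nor the limit\nbd-cardinal aspect of inaccessibility suffices on its own, and that splitting on $\weight{X}$ lets regularity dispatch the large\nbd-weight horn (through Proposition~\ref{PROowpiw}) while the limit\nbd-cardinal property dispatches the small\nbd-weight horn (through the bound $\ow{X}\leq\weight{X}^+$).
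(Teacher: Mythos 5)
Your proof is correct and takes essentially the same route as the paper's: Corollary~\ref{CORcpctlotsdensity} yields $\density{X}<\kappa$, this bounds $\piweight{X}$ below $\kappa$, and the dichotomy on $\weight{X}$ finishes --- Proposition~\ref{PROowpiw} (using regularity of $\kappa$) kills the horn $\weight{X}\geq\kappa$, while $\ow{X}\leq\weight{X}^+$ (using that $\kappa$ is a limit cardinal) kills the horn $\weight{X}<\kappa$. Your refinement $\piweight{X}\leq\max(\nu,\density{X})$ is a sound and slightly more careful rendering of the paper's step, which instead observes that each factor has $\pi$\nbd-weight below $\kappa$ and bounds the product's $\pi$\nbd-weight by regularity of $\kappa$; the two bookkeepings are interchangeable here.
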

\begin{proof}
Suppose $\kappa$ is weakly inaccessible, $X$ is a product of fewer than $\kappa$\many\ \locpcta, and $\ow{X}\leq\kappa$.  It suffices to prove $\ow{X}<\kappa$.  By Corollary~\ref{CORcpctlotsdensity}, we have $\density{X}<\kappa$; hence, each factor of $X$ has $\pi$\nbd-weight less than $\kappa$; hence, $\piweight{X}<\kappa$.  If $\weight{X}\geq\kappa$, then $\ow{X}>\kappa$ by Proposition~\ref{PROowpiw}, in contradiction with our assumptions about $X$.  Hence, $\weight{X}<\kappa$; hence, $\ow{X}\leq\weight{X}^+<\kappa$.
\end{proof}

\section{The Lindel\"of spectrum}

The spectrum of Noetherian types of Lindel\"of \lots s trivially includes the spectrum of Noetherian types of compact \lots s.  More interestingly, the inclusion is strict, as the next example shows.

\begin{example}\label{EXlindom1}
Theorem~\ref{THMcpctlots} fails for Lindel\"{o}f \lots s.  Let $X$ be $(\omega_1\times\integers)\cup(\{\omega_1\}\times\{0\})$ ordered lexicographically.  The space $X$ is Lindel\"{o}f and nonseparable and $\{\{\la\alpha,n\ra\}:\alpha<\omega_1\text{ and }n\in\integers\}\cup\{X\setminus(\alpha\times\integers):\alpha<\omega_1\}$ is an \cardop{\omega_1}\ base of $X$.  Moreover, $X$ has no \omegaop\ base because every local base at $\la\oml,0\ra$ includes a descending $\oml$\nbd-chain of neighborhoods.  Thus, $\ow{X}=\oml$.  Easily generalizing this example, if $\kappa$ is a regular cardinal and $X$ is $(\kappa\times\integers)\cup(\{\kappa\}\times\{0\})$ ordered lexicographically, then $X$ is $\kappa$\nbd-compact and $\ow{X}=\kappa$.
\end{example}

A consequence of Lemma~\ref{LEMmucpctlotsdensity} is that Lindel\"of \lots s cannot have strongly inaccessible Noetherian type, just as in the compact case.  More generally, we have the following theorem, which is proved just as Theorem~\ref{THMlcownotwi} was proved.

\begin{theorem}\label{THMmucpctlotsntnotk}
Suppose $\kappa$ is a weakly inaccessible cardinal, $\card{\lambda^{<\mu}}<\kappa$ for all $\lambda<\kappa$, and $X$ is a $X$ is a product of fewer than $\kappa$\many\ $\mu$\nbd-compact GO-spaces.  We then have $\ow{X}\not=\kappa$.
\end{theorem}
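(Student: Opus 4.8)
The plan is to run the proof of Theorem~\ref{THMlcownotwi} essentially verbatim, with Lemma~\ref{LEMmucpctlotsdensity} taking over the role played there by Corollary~\ref{CORcpctlotsdensity} (which is, after all, just the case $\mu=\omega$ of that lemma). Write $X=\prod_{i<\nu}X_i$ where $\nu<\kappa$ and each $X_i$ is a $\mu$\nbd-compact GO-space. Since $\ow{X}>\kappa$ would immediately give $\ow{X}\not=\kappa$, I may assume $\ow{X}\leq\kappa$, and then it suffices to prove the strict inequality $\ow{X}<\kappa$. The hypotheses are exactly those of Lemma~\ref{LEMmucpctlotsdensity}: $\kappa$ is weakly inaccessible, hence regular and uncountable, and $\card{\lambda^{<\mu}}<\kappa$ for all $\lambda<\kappa$ is assumed outright. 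So the lemma applies and yields $\density{X}<\kappa$.

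From this density bound I would extract a $\pi$\nbd-weight bound. Each factor $X_i$ is a single GO-space, so the $\pi$\nbd-base construction recorded just after Proposition~\ref{PROowpiw} gives $\piweight{X_i}=\density{X_i}\leq\density{X}<\kappa$. Passing from these factorwise bounds to $\piweight{X}<\kappa$ is the one point that wants a moment's care: a $\pi$\nbd-base of the product formed from finite-support boxes has size at most $\max(\nu,\sup_{i<\nu}\piweight{X_i},\omega)$, and this stays below $\kappa$ precisely because $\nu<\kappa$ and, by the regularity of $\kappa$, a supremum of fewer than $\kappa$\many\ cardinals each below $\kappa$ is again below $\kappa$. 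Having secured $\piweight{X}<\kappa=\cf\kappa$, the rest is cardinal arithmetic: if $\weight{X}\geq\kappa$ then $\piweight{X}<\cf\kappa\leq\kappa\leq\weight{X}$, so Proposition~\ref{PROowpiw} forces $\ow{X}>\kappa$, contradicting $\ow{X}\leq\kappa$. Hence $\weight{X}<\kappa$, and since $\kappa$ is a limit cardinal this gives $\ow{X}\leq\weight{X}^+<\kappa$, completing the argument.

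I do not expect a genuine obstacle here, since the substantive content is entirely absorbed into Lemma~\ref{LEMmucpctlotsdensity}; the theorem is a short deduction from it. The only subtlety worth flagging is the product $\pi$\nbd-weight estimate, where one must invoke both $\nu<\kappa$ and the regularity of $\kappa$ (weak inaccessibility supplies both regularity and limitness, the latter being exactly what makes $\weight{X}^+<\kappa$ at the end), together with the GO-space fact that $\pi$\nbd-weight agrees with density on each factor.
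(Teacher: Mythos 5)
Your proof is correct and takes essentially the same route as the paper, which likewise assumes $\ow{X}\leq\kappa$, applies Lemma~\ref{LEMmucpctlotsdensity} to get $\density{X}<\kappa$, deduces $\piweight{X}<\kappa$ factorwise, and then uses Proposition~\ref{PROowpiw} to force $\weight{X}<\kappa$ and conclude $\ow{X}\leq\weight{X}^+<\kappa$. Your explicit product $\pi$\nbd-weight estimate (invoking $\nu<\kappa$ and regularity) merely fills in a step the paper leaves implicit.
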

\begin{proof}
Suppose that $\ow{X}\leq\kappa$.  Let us show that $\ow{X}<\kappa$. By Lemma~\ref{LEMmucpctlotsdensity}, we have $\density{X}<\kappa$; hence, each factor of $X$ has $\pi$\nbd-weight less than $\kappa$; hence, $\piweight{X}<\kappa$.  If $\weight{X}\geq\kappa$, then $\ow{X}>\kappa$ by Proposition~\ref{PROowpiw}, in contradiction with our assumptions about $X$.  Hence, $\weight{X}<\kappa$; hence, $\ow{X}\leq\weight{X}^+<\kappa$.
\end{proof}

\begin{corollary}
If $\kappa$ is strongly inaccessible, then the class of Noetherian types of $\mu$\nbd-compact GO-spaces excludes $\kappa$ if and only if $\mu<\kappa$.
\end{corollary}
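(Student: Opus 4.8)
The plan is to prove the two directions of the biconditional separately, leaning on Theorem~\ref{THMmucpctlotsntnotk} for one direction and on Example~\ref{EXlindom1} for the other. Writing $\kappa$ for the strongly inaccessible cardinal and $\mu$ for a fixed infinite cardinal, the claim is that no $\mu$\nbd-compact GO-space has Noetherian type $\kappa$ exactly when $\mu<\kappa$.

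For the direction $\mu<\kappa\Rightarrow\kappa$ is excluded, the strategy is to verify the hypotheses of Theorem~\ref{THMmucpctlotsntnotk} and then apply it to an arbitrary $\mu$\nbd-compact GO-space $X$, viewed as the trivial (one\nbd-factor) product of fewer than $\kappa$\many\ $\mu$\nbd-compact GO-spaces. Since $\kappa$ is strongly inaccessible it is in particular weakly inaccessible, so the only nontrivial hypothesis to check is the cardinal arithmetic condition $\card{\lambda^{<\mu}}<\kappa$ for all $\lambda<\kappa$. Here I would use that $\kappa$ is a strong limit: for every $\lambda<\kappa$ and every $\theta<\mu$ we have $\lambda^\theta\leq 2^{\max(\lambda,\theta)}<\kappa$, because $\max(\lambda,\theta)<\kappa$ and $\kappa$ is a strong limit; then, since $\mu<\kappa$ and $\kappa$ is regular, the sum $\lambda^{<\mu}=\sum_{\theta<\mu}\lambda^\theta$ is a sum of fewer than $\kappa$\many\ cardinals each below $\kappa$, hence itself below $\kappa$. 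With the hypotheses verified, Theorem~\ref{THMmucpctlotsntnotk} yields $\ow{X}\not=\kappa$, so $\kappa$ is excluded.

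For the converse direction it suffices to exhibit a single $\mu$\nbd-compact GO-space of Noetherian type $\kappa$ whenever $\mu\geq\kappa$. Because $\kappa$ is strongly inaccessible it is regular, so the lexicographically ordered space $X=(\kappa\times\integers)\cup(\{\kappa\}\times\{0\})$ of Example~\ref{EXlindom1} is a $\kappa$\nbd-compact linear order with $\ow{X}=\kappa$. I would then observe that $\kappa$\nbd-compactness implies $\mu$\nbd-compactness whenever $\mu\geq\kappa$, since any subcover of size less than $\kappa$ has size less than $\mu$; hence $X$ is a $\mu$\nbd-compact GO-space with Noetherian type $\kappa$, and $\kappa$ is not excluded.

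The main obstacle---really the only place any cardinal arithmetic is needed---is the verification of $\card{\lambda^{<\mu}}<\kappa$ in the first direction, and this is precisely where strong (rather than merely weak) inaccessibility is used: weak inaccessibility alone need not bound $\lambda^{<\mu}$, so the strong\nbd-limit property is essential for reducing to Theorem~\ref{THMmucpctlotsntnotk}. Everything else is bookkeeping: recognizing a single space as a degenerate product and checking the monotonicity of $\mu$\nbd-compactness in $\mu$.
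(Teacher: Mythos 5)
Your proposal is correct and takes essentially the same approach as the paper, whose entire proof consists of citing Theorem~\ref{THMmucpctlotsntnotk} for the ``if'' direction and Example~\ref{EXlindom1} for the ``only if'' direction. Your added details---verifying $\card{\lambda^{<\mu}}<\kappa$ from the strong-limit property together with regularity, and noting that $\kappa$\nbd-compactness implies $\mu$\nbd-compactness for $\mu\geq\kappa$---are exactly the steps the paper leaves implicit, and both are carried out correctly.
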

\begin{proof}
``If'': Theorem~\ref{THMmucpctlotsntnotk}.
``Only if'': Example~\ref{EXlindom1}.
\end{proof}

On the other hand, it is consistent (relative to the consistency of an inaccessible), that some Lindel\"of \lots\ has weakly inaccessible Noetherian type.  To show this, we first force $2^{\aleph_0}\geq\kappa$ where $\kappa$ is weakly inaccessible (say, by adding $\kappa$\many\ Cohen reals).  Next, we construct the desired linear order in this forcing extension using the following theorem.

\begin{theorem}
If $\kappa$ is a weak inaccessible and $2^{\aleph_0}\geq\kappa$, then there is a Lindel\"of linear order $Z$ such that $\ow{Z}=\kappa$.
\end{theorem}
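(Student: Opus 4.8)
The plan is to build $Z$ by blowing up $\kappa$ points of the real line into compact ordinal fibers that carry points of large character. Using $2^{\aleph_0}\ge\kappa$, fix a set $A=\{a_\xi:\xi<\kappa\}$ of $\kappa$ distinct irrationals, and, since $\kappa$ is a regular limit cardinal, fix an increasing sequence $\la\mu_\xi:\xi<\kappa\ra$ of regular cardinals cofinal in $\kappa$. For each $\xi<\kappa$ let $C_\xi=\mu_\xi+1$ with the order topology, a \locpct\ whose top point $t_\xi$ has cofinality $\mu_\xi$ from below. Let $Z$ be the linear order obtained from $\reals$ by replacing each $a_\xi$ with the interval $C_\xi$; concretely, $Z=((\reals\setminus A)\times\{0\})\cup\bigcup_{\xi<\kappa}(\{a_\xi\}\times C_\xi)$ ordered lexicographically and equipped with the order topology, and let $f\colon Z\to\reals$ be the monotone collapse sending each $C_\xi$ to $a_\xi$.

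First I would verify that $Z$ is Lindel\"of. Given an open cover $\mcU$, each compact fiber $C_\xi$ is covered by finitely many members of $\mcU$ whose union contains a convex open $V_\xi\supseteq C_\xi$, and each unblown real lies in some convex open member of $\mcU$. The images under $f$ of these sets form an open cover of $\reals$, which is second countable, so it has a countable subcover. Since each $V_\xi$ is convex and $f[V_\xi]$ is an interval, $V_\xi$ contains the entire fiber over every point interior to $f[V_\xi]$; hence the countably many chosen sets cover every fiber except those lying over the countably many endpoints of the chosen intervals, and each such remaining fiber is compact, so it is covered by finitely many further members of $\mcU$. Thus $Z$ is Lindel\"of, and it is noncompact because $\reals$ is unbounded.

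For the lower bound, the top point $t_\xi$ of $C_\xi$ has left character $\mu_\xi$ and right character $\omega$ in $Z$, so $\character{t_\xi}=\mu_\xi$; therefore every base of $Z$ includes a strictly $\subseteq$\nbd-decreasing $\mu_\xi$\nbd-chain of neighborhoods of $t_\xi$, whose term of index $\lambda$ has at least $\lambda$ supersets within the chain for every infinite $\lambda<\mu_\xi$. Hence no base is \cardop{\lambda}\ for any $\lambda<\mu_\xi$, so $\ow{Z}>\lambda$ for every $\lambda<\mu_\xi$, whence $\ow{Z}\ge\mu_\xi$, and taking the supremum over $\xi$ gives $\ow{Z}\ge\kappa$. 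For the upper bound I would exhibit the \kappaop\ base $\mcB=\mcB_{\mathrm{out}}\cup\bigcup_{\xi<\kappa}\mcB_\xi$, where $\mcB_{\mathrm{out}}=\{f^{-1}[(p,q)]:p<q\text{ rational}\}$ and each $\mcB_\xi$ is a \kappaop\ base of $C_\xi$ augmented by the neighborhoods straddling the two ends of $C_\xi$; such a $\mcB_\xi$ exists because $\ow{C_\xi}=\mu_\xi^+\le\weight{C_\xi}^+=\mu_\xi^+<\kappa$ by Theorem~\ref{THMkappalots}. A member of $\mcB_\xi$ has fewer than $\kappa$ supersets inside its own fiber, none in any other fiber, and only countably many in $\mcB_{\mathrm{out}}$ (one for each rational interval containing $a_\xi$); a member $f^{-1}[(p,q)]$ of $\mcB_{\mathrm{out}}$ has only countably many supersets in $\mcB_{\mathrm{out}}$ and, because every rational interval contains an unblown real, is contained in no fiber set. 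So every member of $\mcB$ lies below fewer than $\kappa$ members of $\mcB$, whence $\ow{Z}\le\kappa$ and $\ow{Z}=\kappa$.

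The three hypotheses enter as follows: $2^{\aleph_0}\ge\kappa$ is exactly what allows a second countable, hence Lindel\"of, base space to carry $\kappa$ blow\nbd-up points; regularity of $\kappa$ makes the characters $\mu_\xi$ cofinal in $\kappa$, so the lower bound reaches $\kappa$; and $\kappa$ being a limit cardinal keeps each fiber's unavoidable obstruction $\ow{C_\xi}=\mu_\xi^+$ strictly below $\kappa$, so the fiberwise bases can be amalgamated into a genuinely \kappaop\ base. I expect the principal obstacles to be the Lindel\"of verification above and the bookkeeping that $\mcB$ is \kappaop; the real danger is that some small neighborhood deep inside a fiber $C_\xi$ might acquire $\kappa$\many\ supersets, and the choice of $\mcB_{\mathrm{out}}$ as preimages of rational intervals is precisely what prevents this.
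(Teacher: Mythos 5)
Your Lindel\"of verification is essentially sound (the collapse $f$ is a perfect map since your fibers are compact with endpoints), and your lower bound $\ow{Z}\geq\kappa$ is correct in substance, though the literal claim that every base ``includes a strictly $\subseteq$\nbd-decreasing $\mu_\xi$\nbd-chain'' needs repair, since base members need not be convex near $t_\xi$; the bound survives via a pigeonhole on the countable right side of $t_\xi$, or via the Pressing Down Lemma inside the fiber as in Theorem~\ref{THMkappalots}. The fatal problem is the upper bound, at exactly the spot you flagged as ``the real danger,'' and it is not fixable by bookkeeping: your $Z$ satisfies $\ow{Z}>\kappa$, so \emph{no} base of it is \kappaop. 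The culprit is that the compact fibers $C_\xi=\mu_\xi+1$ have endpoints. The bottom point $b_\xi=\la a_\xi,0\ra$ has immediate successor $\la a_\xi,1\ra$, so any base $\mcU$ must contain some $U_\xi$ with $b_\xi\in U_\xi\subseteq(-\infty,\la a_\xi,1\ra)$; then $b_\xi=\max U_\xi$, so the sets $U_\xi$ are pairwise distinct, and openness forces $U_\xi\supseteq \inv{f}[(q_\xi,a_\xi)]\cup\{b_\xi\}$ for some rational $q_\xi<a_\xi$. Since $\kappa$ is regular and uncountable, fix a rational $q$ with $q_\xi=q$ for $\kappa$\many\ $\xi$, and then a rational $\varepsilon>0$ with $a_\xi>q+\varepsilon$ for $\kappa$\many\ of those (if each tail caught fewer than $\kappa$ of the $a_\xi$, regularity would fail). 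Every such $U_\xi$ contains the nonempty open set $\inv{f}[(q,q+\varepsilon)]$, so any member of $\mcU$ inside $\inv{f}[(q,q+\varepsilon)]$ has $\kappa$\many\ distinct supersets in $\mcU$. This is also precisely the hole in your accounting of $\mcB$: ``straddling'' members of $\mcB_\zeta$ are not \emph{inside} fiber $\zeta$, and a bottom\nbd-straddling set $\inv{f}[(p,a_\zeta)]\cup\{b_\zeta\}$ swallows the whole fiber $C_\xi$ for every $a_\xi\in(p,a_\zeta)$, so even the singleton of an isolated point of $C_\xi$ acquires $\kappa$\many\ supersets. In effect you have recreated, $\kappa$ times over a separable line, the jump phenomenon behind Proposition~\ref{PROowpiw} and the double\nbd-arrow example.

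The paper's construction is designed to dodge exactly this: its fibers $Y_x=\om^{\op}+\om_{f(x)}+\om$ have \emph{no maximum or minimum}, so every point of a blown fiber has a local base of sets lying entirely inside its own fiber, and the base decomposes into one countable outer family (preimages of rational intervals, which serve the unblown points) together with pairwise disjoint fiber bases of size less than $\kappa$ --- manifestly \kappaop. The price is that the collapse to $[0,1]$ is no longer perfect (each endpointless fiber is clopen, and monotone sequences approaching it from outside have no limit), so Lindel\"ofness is no longer automatic and your cover\nbd-pushing argument would fail. The paper restores it by taking the blown set to be a Bernstein set: the set of points whose fibers escape the countably many chosen outer sets is closed and contained in the Bernstein set, hence countable, and the countably many exceptional fibers are Lindel\"of. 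So to salvage your writeup you must make both changes in tandem: replace $C_\xi$ by endpointless Lindel\"of fibers, and replace ``any $\kappa$ distinct irrationals'' by a set with no perfect subset (automatic when $\kappa<2^{\aleph_0}$, a Bernstein set when $\kappa=2^{\aleph_0}$).
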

\begin{proof}
Let $B$ be a Bernstein subset of $X=[0,1]$, \ie\ $B$ includes some point in $P$ and misses some point in $P$, for all perfect $P\subseteq X$.  Let $f\colon B\rightarrow\kappa$ be surjective.  For each $x\in B$, set $Y_x=\om^{\text{op}}+\om_{f(x)}+\om$, which is Lindel\"of.  For each $x\in X\setminus B$, set $Y_x=\{0\}$.  Set $Z=\bigcup_{x\in X}(\{x\}\times Y_x)$ ordered lexicographically.  First, let us show that $Z$ is Lindel\"of.  Let $\mcU$ be an open cover of $Z$.  For every $x\in X\setminus B$, $\la x,0\ra$ has neighborhoods $O_x$ and $U_x$ such that $\mcU\ni U_x\supseteq O_x=\bigcup_{a<b<c}(\{b\}\times Y_b)$ where $a,c\in (X\cap\rationals)\cup\{\pm\infty\}$.  
Therefore, there is a countable $D\subseteq X$ such that $\{O_x:x\in D\}$ covers $(X\setminus B)\times\{0\}$.  Set $\mcV=\{U_x:x\in D\}$ and
$C=\{x\in X: Y_x\not\subseteq\bigcup_{x\in D}O_x\}$.  The set $C$ is closed in $X$ and a subset of the Bernstein set $B$, so $C$ is countable.  Therefore, $\bigcup_{x\in C}(\{x\}\times Y_x)$ is Lindel\"of; hence, it is covered by a countable $\mcW\subseteq\mcU$, making $\mcV\cup\mcW$ a countable subcover of $\mcU$.

Finally, let us show that $\ow{Z}=\kappa$.  For every $\alpha<\kappa$ and $x\in\inv{f}[\{\alpha+1\}]$, $Y_x$ has a point with cofinality $\om_{\alpha+1}$, so $\ow{Z}\geq\omega_{\alpha+1}$.  Therefore, it suffices to construct a \kappaop\ base of $Z$.  Let $\mcA$ denote the countable set of all sets of the form $\bigcup_{a<b<c}Y_b$ where $a,c\in (X\cap\rationals)\cup\{\pm\infty\}$, which includes a local base at $\la x,0\ra$ for every $x\in X\setminus B$.  Since each $Y_x$ for $x\in B$ has no maximum or minimum, we can combine $\mcA$ with a copy of a base $\mcB_x$ of $Y_x$ for each $x\in B$ in order to produce a base $\mcB$ of $Z$.  We may choose each $\mcB_x$ to have size less than $\kappa$, so $\mcB$ must be \kappaop.
\end{proof}

\end{document}